\pgfplotsset{compat=1.15}
\def\calE{\mathcal{E}}
\def\calO{\mathcal{O}}
\def\bbA{\mathbb{A}}
\def\bbF{\mathbb{F}}
\def\bbG{\mathbb{G}}
\def\bbP{\mathbb{P}}
\def\bbR{\mathbb{R}}
\def\bbZ{\mathbb{Z}}
\def\Pic{\text{Pic}}
\def\Proj{\text{Proj}}
\def\Spec{\text{Spec}}
\def\Supp{\text{Supp}}
\newtheorem{thm}{Theorem}[section]
\newtheorem{prop}[thm]{Proposition}
\newtheorem{rmk}[thm]{Remark}
\newtheorem{lm}[thm]{Lemma}
\title{The moduli space of rational elliptic surfaces of index two}
\author{Rick Miranda}
\address{Department of Mathematics, Colorado State University}
\email{rick.miranda@colostate.edu}
\author{Aline Zanardini}
\address{Mathematical Institute, Leiden University}
\email{a.zanardini@math.leidenuniv.nl}
\date{\today}
\begin{document}

\begin{abstract}
In this paper we construct a moduli space for marked rational elliptic surfaces of index two as a non-complete toric variety of dimension nine. We also construct compactifications of this moduli space, which are obtained as quotients of $\bbA^{12}$ by an action of $\bbG_m^3$.
\end{abstract}

\maketitle

\tableofcontents

\section{Introduction}
We say a smooth and projective rational surface $Y$ is a rational elliptic surface (RES)
if $Y$ admits a relatively minimal fibration $\calE:Y \to \bbP^1$
whose generic fiber is a smooth curve of genus one.
We do not necessarily assume the existence of a global section.
If $Y$ is a rational elliptic surface,
then there exists some $m\geq 1$, called the index of the fibration,
so that $\mathcal{E}$ is given by the anti-pluricanonical system $|-mK_Y|$.
Moreover, $m=1$ if and only if $\mathcal{E}$ admits a global section
and whenever $m>1$ there exists exactly one multiple fiber in $Y$,
which is of multiplicity $m$ (see e.g. \cite[Chapter V, \S 6]{dc}).
In this paper we are interested in the case $m=2$.

Rational elliptic surfaces of index $m$ can be realized
as a nine-fold blow-up of $\mathbb{P}^2$,
where the nine points are base points of a Halphen pencil (of index $m$)
\cite[Theorem 5.6.1]{dc} -- these are pencils of plane curves of degree $3m$
having nine (possibly infinitely near) base points of multiplicity $m$.
The multiple fiber corresponds to the (unique) cubic curve through the nine points.
Any RES of index $m$ comes with a rational $m$-section $\theta$,
which can be taken to be the exceptional curve of the last of the nine blowups.
It is not uniquely determined by $\mathcal{E}$.
If $Y$ is a RES of index $m$ with a given rational $m$-section $\theta$,
we will call the pair $(Y,\theta)$ a \emph{marked RES} of index $m$.

Counting parameters, we see that marked RESs of index one (also called Weierstrass fibrations)  depend on $8$ parameters -- these are parametrized by an open subset of the Grassmannian $G(2,10)$ of pencils of plane cubics. However, the construction of the Jacobian surface shows marked RESs of index at least two depend on $9$ parameters. Essentially, we have the additional choice of the multiple fiber.

In \cite{mirandaW} the moduli space of marked RESs of index one was constructed via geometric invariant theory (GIT). A different compactification was later obtained in \cite{hl}, where it is shown that the moduli space is rational. More recently, modular compactifications via stable slc pairs have also been constructed in \cite{alexeevK3}, \cite{alexeevADE} and \cite{ascher}. In this article we address the problem of constructing a moduli space for marked RESs of index two. 

When the choice of a marked bisection is not part of the classification problem, then a possible approach to constructing the corresponding moduli space has been considered in \cite{az}.  The addition of the additional data of the chosen bisection - the marking - seems to rigidify things so that all the RES's of index two are semi-stable.  This is not the case for the unmarked analysis (\cite{az}) nor was the case for the RES's of index one (the Weierstrass fibrations considered in \cite{mirandaW}).

First, in Section \ref{double}, we describe each marked RES of index two
as a double cover of $\bbP^1\times\bbP^1$,
and reduce the moduli problem to that of the branch curve.
Next in Section \ref{sec:normalforms} 
we normalize the branch curve and prove that the moduli space
corresponds to a quotient of $\bbA^{10}$ by $\bbG_m \times \bbZ/2\bbZ$,
which is realized as an explicit toric variety in Section \ref{z2}.
This leads to a non-compact quotient space though.

We then relax the normalization of the equation for the branch curve
and prove in Section \ref{sec:A12quotient}
that the moduli space also corresponds to a specific quotient of $\bbA^{12}$
by $\bbG_m^3$.  This leads to a multitude of toric GIT quotients,
and we identify those that compactify the quotient space obtained with the first approach.
The description is completely explicit, defined by a fan with prescribed cones.
In Section \ref{unstable} we also identify the unstable loci 
and show that all branch curves that give rise to RESs of index two 
are stable points for the action.  Our main result may be presented as follows.

\begin{thm}\label{thm:main}
A moduli space for marked RES's of index two
exists as a non-complete toric variety of dimension $9$,
obtained as a quotient of a single affine toric variety by an action of $\bbG_m$.
Compactifications of this moduli space exist as complete toric varieties,
obtained as quotients of $\bbA^{12}$ by an action of $\bbG_m^3$.
\end{thm}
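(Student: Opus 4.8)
The plan is to carry out the reduction in the stages announced in the introduction, so that the theorem is assembled from the section-by-section results rather than proved directly. First I would invoke the double-cover description from Section \ref{double}: every marked RES of index two is the minimal resolution of a double cover of $\bbP^1\times\bbP^1$ branched along a curve in the linear system $|4f_1 + 4f_2|$ (where $f_i$ are the two rulings), or some such bidegree dictated by the index-two Halphen data, together with the distinguished multiple fiber. The key point here is that the moduli problem for the pair $(Y,\theta)$ is equivalent to the moduli problem for the branch curve together with the marking data, so that from this point on one works entirely with plane (bihomogeneous) equations and linear-automorphism reparametrizations of $\bbP^1 \times \bbP^1$.

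Next I would normalize, following Section \ref{sec:normalforms}: using the subgroup of $\mathrm{Aut}(\bbP^1\times\bbP^1)$ that preserves the relevant structure, put the branch equation into a normal form depending on a vector of coefficients in $\bbA^{10}$, with the residual symmetries reduced to $\bbG_m \times \bbZ/2\bbZ$. This yields the first model of the moduli space as $[\bbA^{10} / (\bbG_m \times \bbZ/2\bbZ)]$, and in Section \ref{z2} this is identified with an explicit affine toric variety quotiented by the remaining $\bbG_m$; the $\bbZ/2\bbZ$ is absorbed into the toric data via a refinement of the lattice. The dimension count $10 - 1 = 9$ gives the asserted dimension, matching the parameter count from the introduction. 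Concretely, one exhibits the weights of the $\bbG_m$-action on the $\bbA^{10}$-coordinates and checks that the induced quotient fan has the claimed cones — this is the ``single affine toric variety modulo $\bbG_m$'' clause of the theorem.

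For the compactification clause I would pass to the less-normalized model of Section \ref{sec:A12quotient}: by retaining two extra scaling parameters one writes the branch curve with coefficients in $\bbA^{12}$ on which $\bbG_m^3$ acts, and one shows this quotient stack is isomorphic to the previous one (the two extra $\bbG_m$'s precisely undo the two extra coefficients). Then the various GIT linearizations of the $\bbG_m^3$-action on $\bbA^{12}$ produce a family of projective (hence complete) toric quotients; among these one selects the chambers in the space of linearizations whose quotient contains the non-complete $9$-fold of the first construction as an open toric subvariety — this is a matter of checking that the relevant fan is a subfan. Finally I would cite the stability analysis of Section \ref{unstable} to confirm that every branch curve actually arising from a genuine RES of index two lies in the stable (in particular semistable) locus, so that these GIT quotients really do parametrize all marked RESs and the open embedding of the moduli space is onto the expected locus.

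The main obstacle I expect is the compactification step: verifying that some GIT chamber for the $\bbG_m^3$-action on $\bbA^{12}$ yields a complete toric variety containing the Section \ref{z2} quotient as a dense open subset requires a careful comparison of two fans living in different lattices (the $\bbG_m$-quotient fan versus the $\bbG_m^3$-quotient fan), together with the bookkeeping needed to see that the $\bbZ/2\bbZ$ identification is compatible on both sides. A secondary, more routine difficulty is simply tracking the $\bbG_m^3$-weights on the twelve coefficients correctly so that the wall-and-chamber structure — and hence the list of compactifications — comes out right; but once the weight matrix is in hand this is a finite combinatorial computation rather than a conceptual hurdle.
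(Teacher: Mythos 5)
Your proposal mirrors the paper's architecture exactly: reduce to branch curves via the double-cover description, normalize to $\bbA^{10}$ with a residual $\bbG_m\times\bbZ/2\bbZ$ action (realized as $\Proj$ of an affine toric variety modulo $\bbG_m$), pass to the relaxed $\bbA^{12}$ model with $\bbG_m^3$ acting, select GIT chambers whose fans contain the earlier cones as a subfan, and invoke the unstable-locus analysis to see every actual RES is stable. The only slip is cosmetic — the key branch datum is a bidegree $(4,3)$ curve with a tacnode (not $|4f_1+4f_2|$), though the full divisor including the fiber through the tacnode does have bidegree $(4,4)$ — and the ``main obstacle'' you flag (matching cones $\check\sigma_{I_{ij}}$ with the $C_{ij}$ of Section~\ref{z2}) is precisely the computation the paper carries out.
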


\section{RES of index two as double covers of $\bbP^1\times \bbP^1$}
\label{double}

Consider $\bbP^1\times\bbP^1$,
and let $h$ (resp. $f$) denote the class of a section (resp. of a fiber).
Let $\bbF$ be the non-minimal rational surface obtained from $\bbP^1\times \bbP^1$
by blowing-up a point $p$ and then blowing-up an infinitely near point (of order 1)
at the intersection of the exceptional curve
and the proper transform $\bar{f}$ of the fiber through $p$. 

Let $e_2$ (resp. $e_1$) denote the class of the second exceptional divisor
(resp. the proper transform of the first);
note that $\bar{f}\sim f-2e_2-e_1$.
Choose a reduced member $C\in |4h+3f-4e_2-2e_1| = |4h+3\bar{f}+2e_2 +e_1|$
with only ade singularities,
that does not have either $\bar{f}$ or $e_1$ as a component. We will show (Theorem \ref{isomclassesbranchcurve}) there is a $1$-$1$ correspondence between
isomorphism classes of marked RESs of index two
and isomorphism classes of such curves $C$.

The condition that $C$ does not contain $e_1$ as a component
means that the singularity of $C$ at $p$ is a double point. In fact $C$ corresponds to a curve on $\bbP^1\times\bbP^1$ of bidegree $(4,3)$
with a tacnode tangent to the fiber through $p$. In particular, when $C$ is irreducible, then by the genus formula, asking for $C$ to have only ade singularities is redundant.

Starting from $\bbF$ and $C$ as above, let $\pi:Y' \to \bbF$ be the double cover map
with branch locus $\Delta$ equal to $C+\bar{f}+e_1 \sim 2(2h + 2\bar{f}+e_2+e_1)\sim 2(2h+2f-3e_2-e_1)$. 
Since $(C \cdot \bar{f}) = (C \cdot e_1)=0$
the branch locus is smooth along $\bar{f}$ and $e_1$.

The pencil $|f|$ lifts to an elliptic fibration on $Y'$, since $(f \cdot \Delta) = 4$.
The elliptic fiber lying over the fiber through $p$ is $\pi^*(\bar{f}+e_1+2e_2)$.

Now, because $K_{\bbF}=-2h-2f+2e_2+e_1$, it follows that
\[
K_{Y'}=\pi^{*}K_{\bbF}+\Delta'=\pi^{*}(K_{\bbF}+\Delta/2)=\pi^*(-e_2)
\]
where $\Delta'$ is the ramification locus of $\pi$.
Thus, by adjunction, the curve $E:= \pi^*(e_2)$ is a curve of genus one.

The curves $\bar{f}$ and $e_1$ lift to $(-1)$ curves $D_1$ and $D_2$ respectively in $Y'$;
let $\mu: Y'\to Y$ denote their contraction.
Then $\mu^*(\mu_*K_{Y'})=K_{Y'}-D_1-D_2$ and $K_Y=\mu_*K_{Y'}$,
and using the projection formula we obtain
\[
K_Y^2=(\mu_*K_{Y'})^2=(K_{Y'}-D_1-D_2)\cdot K_{Y'}=\pi^*(-e_2)\cdot \pi^*(-e_2)+2=2e_2^2+2=0
\]
which implies that $Y$ is a (relatively minimal) rational elliptic surface of index two
with multiple fiber $2\mu_*E$.
Note that the genus of $E=\pi^*(e_2)$ does not change under $\mu$. 

Consider finally the proper transform $\bar{h}$ on $\bbF$ 
of the horizontal section through the point $p$; that is, $\bar{h} \sim h-e_1-e_2$.
We note that $C$ cannot have $\bar{h}$ as a component,
otherwise the residual curve $C-\bar{h}$ would satisfy $(C-\bar{h})\cdot e_1=-1$,
so that it would have to contain $e_1$ as a component as well, which we are forbidding.
Hence $\bar{h}$ is not part of the branch locus of the double cover.
We have $\bar{h}^2=-1$ and $(\bar{h}\cdot C) = 1$,
so that $(\bar{h}\cdot \Delta) = 2$.
These two intersection points are distinct: one is the intersection point with $C$
and one is the intersection point with $e_1$.
Hence $\theta = \pi^*(\bar{h})$ is
a bisection of the elliptic fibration.
It survives in the blowdown to $Y$ as a smooth rational bisection with self-intersection $-1$.

Conversely, we can prove

\begin{prop}
Every rational elliptic surface of index two  $Y\to \mathbb{P}^1$ arises this way.
In particular, it fits in a diagram:
\[
\xymatrix{
Y & \ar[l]_{\mu} Y' \ar[d]_\pi^{2:1} & \\
   & \bbF\ar[r]_{\varphi} & \bbP^1\times \bbP^1
}
\]
\label{converse}
\end{prop}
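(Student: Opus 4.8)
The plan is to reverse the construction carried out above. Fix a marking $\theta$ of $Y$ — every rational elliptic surface of index two admits one, as recalled in the Introduction — so that $\theta$ is a $(-1)$-curve meeting a general fiber of $\calE$ in two points, and let $E_0\sim -K_Y$ be the reduced half-fiber (possibly a cycle of rational curves), the multiple fiber being $2E_0$. Since $\theta\cdot E_0=-\theta\cdot K_Y=1$, the curves $\theta$ and $E_0$ meet in a single point, which will turn out to be one of the two points one has to blow up.

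The geometric engine is the relative degree-two map defined by $\theta$. On a general fiber $F$ of $\calE$, a smooth genus-one curve, the effective divisor $\theta|_F$ has degree two and is nonspecial, so $|\theta|_F|$ is a base-point-free pencil realizing $F$ as a double cover of $\bbP^1$. Letting $F$ vary, these pencils assemble into a dominant rational map $g$ from $Y$ to $\bbP^1$, and then $\calE$ and $g$ together give a degree-two dominant rational map from $Y$ to $\bbP^1\times\bbP^1$, whose fiberwise deck transformation is a birational involution $\iota$ of $Y$. First I would resolve its indeterminacy by a minimal sequence of blow-ups $\mu\colon Y'\to Y$ on which $\iota$ becomes biregular and the map becomes a morphism $\psi\colon Y'\to\bbP^1\times\bbP^1$. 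Factoring $\psi$ through the quotient gives $\psi=\varphi\circ\pi$ with $\pi\colon Y'\to W:=Y'/\iota$ the degree-two quotient map and $\varphi\colon W\to\bbP^1\times\bbP^1$ birational; here $W$ is a smooth rational surface, and since a general fiber of $Y'\to\bbP^1$ is a genus-one curve on which $\iota$ restricts to a double cover of $\bbP^1$, the surface $W$ carries a ruling over $\bbP^1$ (not relatively minimal in general).

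It then remains to identify everything precisely. The indeterminacy of the map is concentrated over the multiple fiber — where the relative degree-two system degenerates, $\theta|_{E_0}$ having degree one — and a local analysis there should show that exactly two blow-ups are needed, at $E_0\cap\theta$ and at one further point of $E_0$; thus $\mu$ contracts precisely two disjoint $(-1)$-curves, which one checks are the curves called $D_1,D_2$ in the construction. Dually, one identifies $W$ with $\bbF$ by exhibiting on $W$ the chain consisting of a $(-2)$-curve, a $(-1)$-curve and a $(-2)$-curve that sum, with the middle one doubled, to the ruling class — this being the image under $\pi$ of the degenerate fiber $2D_1+2E+2D_2$ of $Y'\to\bbP^1$ — which forces $W$ to be the blow-up of $\bbP^1\times\bbP^1$ at a point and at an infinitely near point on the proper transform of a ruling, that is, $W\cong\bbF$. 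Finally, the branch locus of $\pi$ is the image of the fixed locus of $\iota$; its class is pinned down from $K_{Y'}=\pi^{*}(K_{\bbF}+\Delta/2)$ together with the configuration of curves, yielding $\Delta=C+\bar f+e_1$ with $C\in|4h+3f-4e_2-2e_1|$, and adjunction — or irreducibility and the genus formula — shows $C$ is reduced, has only ade singularities, and contains neither $\bar f$ nor $e_1$. The contraction $\mu$ then assembles the stated diagram, with $Y$ at the top since $\mu$ was chosen minimal.

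The step I expect to be the real obstacle is the rigidity claim above: that resolving $\iota$ forces exactly two blow-ups, located as stated, and that the quotient is exactly $\bbF$ rather than some other two-point blow-up of $\bbP^1\times\bbP^1$ or some blow-up of a Hirzebruch surface. Everything hinges on a careful local picture over the half-fiber $E_0$ — equivalently, on recovering the tacnode of $C$ tangent to a fiber from the degeneration of the relative degree-two system there. Once the configuration of negative curves on $Y'$ and $W$ is under control, the remaining verifications — divisor classes, the singularity type of $C$, and the two contractions — are routine, mirroring the computations in the forward direction.
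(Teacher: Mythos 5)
Your plan has the right shape and matches the paper's strategy -- pass to the involution $\iota$ determined by the marking $\theta$, blow up two points on the half-fiber, take the quotient, and identify the quotient with $\bbF$ -- and you even name one of the two correct points ($E_0\cap\theta$). But the criterion you give for what forces the two blow-ups is not the one that actually forces them, and if you carried out the local analysis you defer to, it would give a different answer than you expect.

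You propose to ``resolve its indeterminacy by a minimal sequence of blow-ups $\mu\colon Y'\to Y$ on which $\iota$ becomes biregular and the map becomes a morphism.'' For a relatively minimal elliptic surface, a birational self-map commuting with the fibration is automatically biregular (any indeterminacy would create a $(-1)$-curve inside a fiber), so $\iota$ is already an automorphism of $Y$ and that condition imposes nothing. As for the rational map to $\bbP^1\times\bbP^1$: it is cut out by $|F|$ (base-point-free) and $|\theta+F_0|$, and the latter has exactly \emph{one} base point $q$ on $F_0$, with $q\neq p:=\theta\cap F_0$ (as the paper computes after the proposition). So your stated criteria force exactly one blow-up, at $q$, not two. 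If you stop there, the quotient $\tilde{Y}/\iota$ has an $A_1$ singularity at the image of $p$, because $p$ is still an isolated fixed point of the biregular $\iota$; your assertion that $W$ is a smooth rational surface fails, and nothing in your argument supplies the second blow-up at $p$. What does supply it -- and this is the step the paper makes explicit -- is an analysis of the fixed locus of $\iota$: it is a $4$-section together with exactly two isolated fixed points, both on $F_0$ (one of them $p$, and one checks the other is $q$). Blowing up \emph{both} isolated fixed points is precisely what makes $Y'/\iota$ smooth, and it simultaneously resolves the one indeterminacy point. With that corrected engine, the remaining steps you outline (the $(-2)$--$(-1)$--$(-2)$ chain identifying $W\cong\bbF$, the branch class computation from $K_{Y'}=\pi^*(K_\bbF+\Delta/2)$, the ADE condition on $C$) go through along the same lines as the paper.
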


\begin{proof}
Let $Y\to \bbP^1$ be a RES of index two with multiple fiber $2F_0$, and choose a smooth rational bisection, say $\theta$. On (Pic of) the generic fiber $Y_{\eta}$ we can consider the involution $p \mapsto \theta-p$, which extends to an involution $\iota$ of $Y$ and whose fixed locus consists of a $4-$section and two isolated points lying on $F_0$.
The rational bisection passes through one of the two fixed points for $\iota$.
(In fact, by Riemann-Roch $\theta$ restricted to $Y_{\eta}$, we have a degree two map $Y_{\eta} \to \mathbb{P}^1$ and the involution we are considering is simply the corresponding covering involution.
Then, by Riemann-Hurwitz, on each smooth fiber the fixed locus consists of four points, hence there exists a $4-$section in Fix($\iota$). Note however that such $4-$section must meet $F_0$ at two points, and thus we have two other isolated fixed points lying on $F_0$.)

Denote by $\mu: Y'\to Y$ the blow-up of these two points
and by $\iota$ the induced involution on $Y'$.
The quotient surface $\bbF\doteq Y'/\iota$ is a rational surface
which comes with a $\bbP^1$-fibration to $\bbP^1$ 
and by construction the branch locus is of the form $C+\delta_1+\delta_2$,
where $C$ is a $4-$section (which is the image of the $4$-section on $Y$)
and $\delta_i$ are the images of the two exceptional curves.
We may assume that $\delta_1$ corresponds to the fixed point through which the given rational bisection passes.
In particular, $\delta_i^2=-2$.
The curve $e$ in $\bbF$ which corresponds to $F_0$
is a $(-1)$-curve so we can contract it;
both $\delta_1$ and $\delta_2$ will then become $(-1)$-curves.
We then contract $\delta_1$;
at this point the curve $\delta_2$ will be such that $\delta_2^2=0$,
and gives a fiber of the $\bbP^1$-fibration.
The curve $C$ now has a tacnode at a point $p$ on $\delta_2$,
with a vertical tangent.
The bisection descends to the horizontal section of $\bbF_0$ that passes through the tacnode point $p$.
Hence we have a birational morphism $\varphi: \bbF\to \bbF_N$ for some $N$.
We claim that up to reversing the roles of $\delta_1$ and $\delta_2$ we must have $N=0$ and the branch curve (hence $C$) is as described at the beginning of this section. 
If we choose $\{f,h,e,\delta_1\}$ as basis for $\Pic(\bbF$) 
then $C \sim 4h+3f-4e-2\delta_1$ and $C$ does not contain $\delta_1$ or $\delta_2$ or the horizontal section through $p$.
\end{proof}

\begin{rmk}
More generally, if $S \to \mathbb{P}^1$ is any elliptic surface with two multiple fibers of multiplicities $2m_1$ and $m_2$ ($m_2$ odd) and a multisection of degree $2m_1m_2$ (gcd$(2m_1,m_2)=1$), then one can construct another elliptic surface  $J^{m_1m_2}(S)\to \mathbb{P}^1$ whose generic fiber consists of the set of line bundles of degree $m_1m_2$ on the generic fiber of $S$. This surface comes with a natural involution determined by the   multisection and it has exactly one multiple fiber of multiplicity two. In \cite{friedman}, Friedman shows $J^{m_1m_2}(S)$ can be described as a double cover of $\mathbb{F}_{N}$ branched over a curve in the linear system $|4\sigma+(2k+1)f-4e_2-2e_1|$, where $\sigma$ is the negative section and the numbers $k$ and $N$ satisfy $p_g(S)=k-N-1$. In Proposition \ref{converse} we consider the particular case when $S$ is rational, $m_1=m_2=1$ and hence $S \simeq J^{m_1m_2}(S)$. 
\end{rmk}

Another way of describing how a RES of index two can be realized as double covers of $\mathbb{P}^1\times \mathbb{P}^1$ is presented below.

Fix the pair $(Y,\theta)$ as above.
Denote by $F_0$ the (reduced) fiber that is the double fiber,
so that the fiber class is $F = 2F_0$;
$F_0$ is also the anti-canonical class $-K_Y$.
Note that since $\theta$ is smooth and rational, and $\theta\cdot K_Y = -1$, we must have $\theta^2 = -1$.
Let $p = \theta \cap F_0$ be the point of intersection of $\theta$ with the double fiber;
since $\theta\cdot F = 2$, we have $\theta \cdot F_0 = 1$, so that
$\theta$ and $F_0$ meet transversally at $p$.

Since $F_0$ does not move on $Y$, but $2F_0 = F$ does move in a base-point-free pencil,
the normal bundle of $F_0$ in $Y$ is a two-torsion class $\tau$ in the Picard group of $F_0$.
We may write $\tau$ as a divisor, uniquely as $q-p$,
where $p$ is the point of intersection with $\theta$ as noted above,
and $q$ is another (distinct from $p$) point on $F_0$.
(i.e., $q$ is uniquely determined by the pair $(Y,\theta)$.)

We note that the line bundle $\calO_Y(\theta)$ has trivial $H^1$, which follows from the restriction sequence to $\theta$, noting that $\theta^2=-1$:
\[
0 \longrightarrow \calO_Y \longrightarrow \calO_Y(\theta) \longrightarrow \calO_{\theta}(-1) \longrightarrow 0;
\]
since the left and the right sheaves have trivial $H^1$, so does the middle sheaf.

Consider now the divisor class $\theta+F_0$ on $Y$.
We have the short exact sequence
\[
0 \longrightarrow \calO_Y(\theta) \longrightarrow \calO_Y(\theta+F_0) \longrightarrow \calO_{F_0}(p+\tau) \longrightarrow 0
\]
which then shows that $H^1( \calO_Y(\theta+F_0)) = 0$ too, since the left and right sheaves have trivial $H^1$.
Since the left and right sheaves have one-dimensional $H^0$,
we conclude that $h^0( \calO_Y(\theta+F_0)) = 2$, so that $|\theta+F_0|$ is a pencil on $Y$.

This pencil has no fixed components.
Indeed, the only possible fixed components are the curves $\theta$ and $F_0$,
and if either one is fixed, the residual (which is the other one) would move in a pencil,
and neither curve does.
Since $\theta^2=-1$, $\theta \cdot F_0=1$, and $F_0^2=0$, we see that $(\theta+F_0)^2=1$,
so there is exactly one simple base point to the pencil $|\theta+F_0|$.

That base point must be the point $q$; the restriction sequence to $F_0$ above shows that
$\theta+F_0$ restricts to $F_0$ in the sheaf $\calO_{F_0}(p+\tau) = \calO_{F_0}(q)$,
which has a unique section (that vanishes at the point $q$).
Hence every section of $\calO_Y(\theta+F_0)$ vanishes at $q$.

The general member of $|\theta+F_0|$ is a smooth genus one curve, passing through $q$.
(This follows from adjunction since $K_Y = -F_0$
so that $(\theta+F_0)\cdot(\theta+F_0+K_Y) = (\theta+F_0)\cdot \theta  = 0$.)

If we blow $Y$ up at the point $q$, we obtain a rational surface $\tilde{Y}$,
and the pencil $|\theta+F_0|$ lifts to a base-point free pencil $|E|$
whose general member is curve of genus one; we have $E^2 = 0$ now.

We now have two maps from $\tilde{Y}$ to $\mathbb{P}^1$:
one comes from the original elliptic fibration (given by the linear system $|F|$);
and the other comes from the pencil $|E|$.
This gives a regular map $\pi:\tilde{Y} \to \mathbb{P}^1 \times \mathbb{P}^1$.

The general fiber of $\pi$ is obtained by intersecting two general elements of the two pencils; and
since $E \cdot F = (\theta+F_0)\cdot F = 2$, we conclude $\pi$ is a double cover.

Hence we have proved the following.

\begin{thm}\label{isomclassesbranchcurve}
There is a $1$-$1$ correspondence between
isomorphism classes of marked RESs of index two
and
isomorphism classes of curves $C$ in $\bbP^1\times\bbP^1$ of bidegree $(4,3)$
with a double point tacnode with vertical tangent,
such that $C$ does not contain the fiber or the horizontal section through the tacnode,
and has only ade singularities.
\end{thm}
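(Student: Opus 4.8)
The plan is to exhibit two mutually inverse constructions. Write $\Psi$ for the assignment $C\mapsto(Y,\theta)$ carried out at the beginning of this section --- blow up $\bbP^1\times\bbP^1$ at $p$ and at an infinitely near point to get $\bbF$, form the double cover $\pi\colon Y'\to\bbF$ branched along $C+\bar f+e_1$, contract the two $(-1)$-curves $D_1,D_2$ lying over $\bar f$ and $e_1$ to obtain $Y$, and set $\theta=\mu_*\pi^*\bar h$ --- and write $\Phi$ for the assignment $(Y,\theta)\mapsto C$ furnished by Proposition \ref{converse}: form the involution $\iota$ given on the generic fibre by $x\mapsto\theta-x$, blow up its two isolated fixed points on the multiple fibre $F_0$ to get $Y'$ with an induced involution, pass to $\bbF=Y'/\iota$, and let $C$ be the image in $\bbF$ of the $4$-section contained in $\mathrm{Fix}(\iota)$, with the convention that the branch curve $\delta_1$ meets the image of the exceptional curve through which $\theta$ passes. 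The substance of both constructions has already been established above; the remaining work is to check that $\Psi$ and $\Phi$ land in the claimed classes, that each is canonical enough to descend to isomorphism classes and to respect isomorphisms, and that $\Psi\circ\Phi$ and $\Phi\circ\Psi$ are the identity.

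That $\Psi$ lands among marked RES's of index two is exactly the content of the opening paragraphs: $Y$ is a relatively minimal rational elliptic surface with $K_Y^2=0$ and one multiple fibre of multiplicity two, and $\theta$ is a smooth rational bisection with $\theta^2=-1$. The hypotheses imposed on $C$ --- reduced, only ade singularities, containing none of $\bar f$, $e_1$, $\bar h$ --- are precisely what makes the (resolved) double cover smooth and keeps $\theta$ an honest bisection rather than absorbing part of the branch locus. Conversely, that $\Phi$ lands among the curves described in the statement is the content of Proposition \ref{converse} and the remark following it: $\bbF$ is $\bbP^1\times\bbP^1$ blown up at a point and an infinitely near point, and $C$ appears as a curve of bidegree $(4,3)$ with a double-point tacnode tangent to the fibre through the blown-up point, containing neither that fibre nor the horizontal section through the tacnode. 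In both constructions every choice --- the curves $D_1,D_2,\bar h$ on the $\Psi$ side, the involution $\iota$ and the labelling of $\delta_1$ on the $\Phi$ side --- is canonically determined by the input, so the output is well defined up to isomorphism; and an isomorphism of inputs propagates through each step. Indeed, for $\Phi$ an isomorphism $(Y_1,\theta_1)\cong(Y_2,\theta_2)$ intertwines $\iota_1,\iota_2$ and the distinguished fixed points, hence descends to $\bbF_1\cong\bbF_2$ matching branch divisors and the labels $\delta_1$, hence matches the $C_i$ compatibly with the rulings; and for $\Psi$ an isomorphism of the pairs $(C_i\subset\bbP^1\times\bbP^1)$ necessarily preserves the ruling, since it must fix the vertical tangent direction at the tacnode and the bidegree, and so lifts to $\bbF_1\cong\bbF_2$, then to the double covers, then descends to $Y_1\cong Y_2$ carrying $\theta_1$ to $\theta_2$.

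The crux is the inverse property. For $\Psi\circ\Phi=\mathrm{id}$ one runs $\Psi$ on the curve $C$ produced by $\Phi$: the intermediate surface $\bbF$ and the branch divisor are the very ones produced by $\Phi$, so the double cover returns $Y'$ with its involution and contracting $D_1,D_2$ returns $Y$; one then checks $\mu_*\pi^*\bar h=\theta$ using that $\bar h$ is the section through the tacnode, which descends from the fixed point of $\iota$ on $F_0$ through which $\theta$ passes. For $\Phi\circ\Psi=\mathrm{id}$ one checks that the involution attached by $\Phi$ to $\Psi(C)$ is the descent of the deck transformation of $\pi$ --- its $4$-section fixed locus being $\mu_*\pi^*C$ and its two isolated fixed points on the multiple fibre being the images of $D_1$ and $D_2$, with $\theta$ passing through the latter --- so that blowing these two points up and quotienting returns $\bbF$ and $C$.

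The main obstacle, in my view, is reconciling the two a priori different realizations of $Y$ as a double cover: the ``blow up $\bbP^1\times\bbP^1$ first'' realization used to define $\Psi$, and the ``blow up the base point $q$ of $|\theta+F_0|$ and then use the two pencils $|F|$ and $|\theta+F_0|$'' realization used in the discussion preceding the theorem. One must match the point $q$ --- pinned down by the two-torsion class $\tau$ of the normal bundle of $F_0$ --- with the second isolated fixed point of $\iota$ on $F_0$, and match the ruling pulled back from the second factor of $\bbP^1\times\bbP^1$ with the pencil $|\theta+F_0|$, all while keeping track of which $(-1)$-curves are being contracted and in which order. Once that dictionary is in place the two maps are manifestly inverse, and the theorem follows.
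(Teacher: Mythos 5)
Your proposal is correct and follows essentially the same route as the paper: $\Psi$ is the double-cover construction that opens Section~\ref{double}, and $\Phi$ is the involution construction of Proposition~\ref{converse}, with the pencil $|\theta+F_0|$ discussion as the second realization to reconcile. The paper itself never explicitly carries out the mutual-inverse check or the reconciliation of the two double-cover pictures (it simply asserts ``Hence we have proved the following''), so your identification of that verification as the remaining work is a fair and accurate reading of what the paper leaves implicit rather than a departure from it.
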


\section{Normal forms for the branch locus}
\label{sec:normalforms}

The curve $C$ is (the partial resolution of) a $(4,3)$ curve
with a double point singularity which is at least a tacnode at $p$ 
(i.e. locally analytically we have $w^2+z^4=0$) 
and which is tangent to the fiber $f$ at $p$ 
(so that when resolving the singularity 
we blow-up a point in the intersection of the fiber and the first exceptional curve).

Therefore, we may choose coordinates $([x:y],[u:v])$ in $\bbP^1\times \bbP^1$
which places the point $p$ at $x=u=0$
and the fiber $f$ through $p$ is defined by $u=0$.
The curve $C$ is then given by a bihomogenous equation 
$f_C:=\sum a_{ij}u^{i}x^{j}v^{3-i}y^{4-j}=0$
of degree four in $[x:y]$ and of degree three in $[u:v]$,
and $C$ has the required tacnode at $p=([0:1],[0:1])$ with vertical tangent ($u=0$)
if and only if 
\[
a_{00}=a_{01}=a_{02}=a_{03}=a_{10}=a_{11}=0.
\]
Moreover, we may choose the coordinates so that the curve $C$ meets the horizontal section through $p$
(now defined by $x=0$) at the point $x=v=0$;
hence we may also assume that $a_{30}=0$. 

Furthermore, 

\begin{lm}
The curve $C$ must contain the monomials $u^2vy^4$ and $x^4v^3$ with nonzero coefficients. 
\label{normal}
\end{lm}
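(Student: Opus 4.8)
\emph{The plan} is to restate the lemma as the assertion that, in the normalized equation $f_C=\sum a_{ij}u^ix^jv^{3-i}y^{4-j}$, the coefficient $a_{04}$ of $x^4v^3$ and the coefficient $a_{20}$ of $u^2vy^4$ are both nonzero. Each of these will be obtained by restricting $f_C$ to one of the two curves through $p$ that $C$ is not allowed to contain — the fiber $\bar{f}=\{u=0\}$ and the horizontal section $\bar{h}=\{x=0\}$ — and noting that the normalizations already imposed in this section collapse the restriction to a single monomial. Since $u$ and $x$ are prime in the bigraded coordinate ring, $f_C$ vanishing identically along $\{u=0\}$ (resp. $\{x=0\}$) is the same as $u\mid f_C$ (resp. $x\mid f_C$), i.e. as $\bar{f}$ (resp. $\bar{h}$) being a component of $C$.

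For $a_{04}$: restricting $f_C$ to $\{u=0\}$ annihilates every term with $i\geq 1$, leaving $\sum_{j=0}^{4}a_{0j}x^jv^3y^{4-j}$; the tacnode normalizations $a_{00}=a_{01}=a_{02}=a_{03}=0$ reduce this to $a_{04}x^4v^3$. If $a_{04}$ were zero this restriction would be identically zero, so $\bar{f}$ would be a component of $C$, contrary to the standing hypothesis on $C$. Hence $a_{04}\neq 0$.

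For $a_{20}$: the argument is symmetric. Restricting $f_C$ to $\{x=0\}$ annihilates every term with $j\geq 1$, leaving $\sum_{i=0}^{3}a_{i0}u^iv^{3-i}y^{4}$; the normalizations $a_{00}=a_{10}=0$ (tacnode) and $a_{30}=0$ (the choice of horizontal section through $p$) reduce this to $a_{20}u^2vy^4$. If $a_{20}$ vanished, $\bar{h}$ would be a component of $C$, which is excluded (see Section \ref{double}). Hence $a_{20}\neq 0$. Equivalently, in affine coordinates $(x,u)$ centered at $p$ the $2$-jet of $f_C$ is $a_{20}u^2$, so $a_{20}\neq 0$ is precisely the requirement that $C$ have a genuine double point at $p$ with tangent cone the doubled fiber.

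There is no real obstacle here: the only point needing care is the bookkeeping of which monomials survive each of the two restrictions, and matching the two possible vanishings to the correct forbidden component; everything else is immediate from the normal form set up above.
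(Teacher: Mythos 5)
Your proof is correct and follows essentially the same approach as the paper's: both arguments observe that the vanishing of $a_{04}$ (respectively $a_{20}$), combined with the normalizations already imposed, forces $u$ (respectively $x$) to divide $f_C$, which would make the fiber (respectively the horizontal section) through $p$ a component of $C$, contradicting the hypotheses on $C$. The extra remark about the $2$-jet is a harmless aside and not needed for the proof.
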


\begin{proof}
We have noted above that $C$ cannot contain
either the vertical fiber or the horizontal section through $p$ as components.  
If $a_{20}=0$, then all $a_{i0}=0$ in $f_C$, so that $x$ divides $f_C$
and the horizontal section would be a component.
If $a_{04}=0$, then all $a_{0j}=0$ in $f_C$, so that $u$ divides $f_C$
and the vertical fiber would be a component.
\end{proof}

There is another normalization of $f_C$ to exploit.
Up to changing the coordinates $[x:y]$, but fixing $[0:1]$,
we can further assume $a_{21}=0$.
In fact, a transformation of the form $[x:y] \mapsto [\alpha x:y]$
where $\alpha=-{a_{21}}/{4a_{20}}$ 
makes $a_{21}$ vanish.

All these reductions tell us we can organize the coefficients $a_{ij}$ in a matrix of the form:
\[
\begin{matrix}
 & v^3 & uv^2 & u^2v & u^3 \\
x^4 & a_{04} & a_{14} & a_{24} & a_{34} \\
x^3y &  0 & a_{13} & a_{23} & a_{33} \\
x^2y^2 & 0 & a_{12} & a_{22} & a_{32} \\
xy^3 & 0 & 0 & 0 & a_{31} \\
y^4 & 0 & 0 & a_{20} & 0 
\end{matrix}
\]

We define $Q = \{20,04,12,13,14,22,23,24,31,32,33,34\}$ to be the set of (double) indices appearing in the exponents of the terms for possible $f_C$, as above.

At this point the set of automorphisms of $\bbP^1\times\bbP^1$
which preserve this normal form for $f_C$
are of the form $([x:y],[u:v]) \mapsto ([tx:y],[su:v])$
for nonzero $s,t \in \bbG_m$.
Of course in the vector space containing such polynomials $f_C$
we also have the homothety (sending $f_C$ to $rf_C$ for $r\neq 0$) as well.

We will call such an equation $f_C$ \emph{allowable} if $a_{20}\neq 0$,
$a_{04}\neq 0$, and $f_C=0$ has only ade singularities otherwise.
This proves the following.

\begin{prop}\label{normalA12}
Define the action of $\bbG_m^3$ on $\bbA^{12}$
(with coordinates $\{a_{ij}\;|\; ij \in Q\}$)
by $(r,s,t)\cdot a_{ij} = rs^it^j a_{ij}$.
The isomorphism classes of possible branch curves $C$
are in $1$-$1$ correspondence with the allowable $\bbG_m^3$-orbits of this action.
\end{prop}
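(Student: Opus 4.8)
The plan is to read off the proposition from Theorem~\ref{isomclassesbranchcurve} — which matches isomorphism classes of marked RES's of index two with isomorphism classes of branch curves $C$ — together with the normalization performed above. Concretely, I would prove two assertions: (i) every such $C$ is isomorphic to one whose equation $f_C$ is in the displayed normal form and is allowable, so that it determines a point of the allowable locus in $\bbA^{12}$ with coordinates $\{a_{ij}\}_{ij\in Q}$; and (ii) two allowable normal-form equations $f_C,f_{C'}$ define isomorphic curves if and only if they lie in a single $\bbG_m^3$-orbit. Assertion (i) is exactly the content of the reductions carried out before Lemma~\ref{normal}: the coordinate changes there put $C$ into normal form, Lemma~\ref{normal} gives $a_{20}\neq0$ and $a_{04}\neq0$, and the ade hypothesis on $C$ supplies the rest of allowability. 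So the work is in (ii).

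For (ii) I would first identify the group. The automorphisms $([x:y],[u:v])\mapsto([tx:y],[su:v])$ of $\bbP^1\times\bbP^1$, together with the homothety $f_C\mapsto rf_C$, form a copy of $\bbG_m^3$; substituting $x\mapsto tx$ and $u\mapsto su$ into $f_C=\sum a_{ij}u^ix^jv^{3-i}y^{4-j}$ shows immediately that its action on the coefficients is $(r,s,t)\cdot a_{ij}=rs^it^ja_{ij}$, which is the action in the statement. This action preserves the allowable locus — $a_{20}\neq0$ and $a_{04}\neq0$ are preserved, and a linear change of coordinates together with a scaling does not change the isomorphism type of the zero locus — so in particular equations in one orbit define isomorphic curves; this is the easy implication in (ii).

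For the converse, let $\Phi\in\mathrm{Aut}(\bbP^1\times\bbP^1)$ satisfy $\Phi(C)=C'$ with $C,C'$ in allowable normal form. Since $C$ has bidegree $(4,3)$ with $4\neq3$, $\Phi$ cannot interchange the two rulings, so $\Phi=\Phi_1\times\Phi_2$ with $\Phi_i\in\mathrm{PGL}_2$. Then $\Phi$ carries the distinguished tacnode of $C$, at $p=([0:1],[0:1])$, to that of $C'$, so $\Phi_1$ and $\Phi_2$ fix $[0:1]$; it carries the tacnode tangent line $\{u=0\}$ and the horizontal section $\{x=0\}$ to themselves (automatic given the previous sentence); and it carries the residual point $q'$, defined by $C\cdot\{x=0\}=2p+q'$ and equal to $([0:1],[1:0])$ for both curves, to itself — which forces $\Phi_2$ to fix $[1:0]$ as well. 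Hence $\Phi_2\colon[u:v]\mapsto[su:v]$, while $\Phi_1$, a M\"obius transformation fixing $[0:1]$, has the form $[x:y]\mapsto[ax:cx+y]$. To finish I would write $f_{C'}\circ\Phi=\lambda f_C$, substitute $x\mapsto ax$, $y\mapsto cx+y$, $u\mapsto su$ into $f_{C'}$, and compare the coefficient of the monomial $xy^3$: on the left it collects $4c\sum_i a'_{i0}s^iu^iv^{3-i}$ from $(cx+y)^4$ in the $j=0$ terms and $a\sum_i a'_{i1}s^iu^iv^{3-i}$ from the $j=1$ terms, which in normal form are $4c\,a'_{20}s^2u^2v$ and $a\,a'_{31}s^3u^3$, while on the right it is $\lambda a_{31}u^3$; equating the $u^2v$-components gives $4c\,a'_{20}s^2=0$, and $a'_{20}\neq0$ (allowability) and $s\neq0$ force $c=0$. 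Thus $\Phi=([ax:y],[su:v])$, which together with $\lambda$ realizes an element of the $\bbG_m^3$-action, so $f_C$ and $f_{C'}$ lie in the same orbit.

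The step I expect to be the main obstacle is this last rigidity: geometrically one can only pin down the point $p$, the two rulings through it, the tacnode tangent, and the residual point on the horizontal section, which still leaves a two-parameter family of candidates for $\Phi_1$; the extra rigidity that cuts this down to the diagonal torus has to come from the purely algebraic normalization $a_{21}=0$ (used above together with $a_{01}=a_{11}=0$ and $a_{20}\neq0$). It is also worth being careful that ``isomorphism class of branch curve'' in Theorem~\ref{isomclassesbranchcurve} is understood to remember the distinguished tacnode, so that $\Phi$ is genuinely forced to fix $p$.
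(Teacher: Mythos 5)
Your proof is correct and takes essentially the same route as the paper: the paper treats Proposition~\ref{normalA12} as a summary of the preceding normalizations and simply \emph{asserts} that the automorphisms preserving the normal form are $([x:y],[u:v])\mapsto([tx:y],[su:v])$ together with homothety, whereas you verify this rigidity explicitly — in particular the coefficient computation showing $c=0$ from $a'_{20}\neq0$ and $a_{21}=a'_{21}=0$ supplies the one step the paper leaves unjustified. (One minor slip: the two-dimensional freedom in $\Phi_1$ is already cut down to one non-torus parameter by projective scaling, but your computation correctly eliminates exactly that unipotent parameter.)
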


We have yet one more normalization to propose.
Since by Lemma \ref{normal} the coefficients $a_{20}$ and $a_{04}$ are nonzero,
we may use elements of $\bbG_m^3$ to make these two coefficients equal to one.
The subgroup of $\bbG_m^3$ that fixes these coefficients
is equal to the set of elements of the form $(r,s,t)=(t^{-4},\pm t^2,t)$;
these form a subgroup isomorphic to $\bbG_m \times \bbZ/2\bbZ$.

Using this further normalization, we have the following.
Define $\bar{Q} = Q\setminus\{20,04\} = \{12,13,14,22,23,24,31,32,33,34\}$
to be the set of remaining indices.

\begin{prop}\label{normalA10}
Define the action of $\bbG_m\times\bbZ/2\bbZ$ on $\bbA^{10}$
(with coordinates $\{a_{ij}\;|\; ij \in \bar{Q}\}$)
by $(t\in\bbG_m,\epsilon\in\{\pm 1\})\cdot a_{ij} = t^{2i+j-4}\epsilon^i a_{ij}$.
The isomorphism classes of possible branch curves $C$
are in $1$-$1$ correspondence with the allowable $\bbG_m\times\bbZ/2\bbZ$-orbits of this action.
\end{prop}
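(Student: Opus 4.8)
The plan is to deduce Proposition \ref{normalA10} directly from Proposition \ref{normalA12} by a standard reduction-of-structure-group argument. Proposition \ref{normalA12} tells us that isomorphism classes of branch curves correspond to allowable $\bbG_m^3$-orbits on $\bbA^{12}$ with coordinates $\{a_{ij} \mid ij \in Q\}$. Since allowability forces $a_{20} \neq 0$ and $a_{04} \neq 0$, every allowable orbit meets the locally closed subvariety $U = \{a_{20} = a_{04} = 1\}$; intersecting an orbit with $U$ is exactly the normalization step already described in the text. So the first step is to identify $U$ with $\bbA^{10}$ (coordinates $\{a_{ij} \mid ij \in \bar{Q}\}$, which is precisely $Q$ with $20$ and $04$ deleted) and to show that two points of $U$ lie in the same $\bbG_m^3$-orbit if and only if they lie in the same orbit under the stabilizer in $\bbG_m^3$ of the point $(a_{20},a_{04}) = (1,1)$.

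Second, I would compute that stabilizer. Under the action $(r,s,t)\cdot a_{ij} = r s^i t^j a_{ij}$, fixing $a_{20}$ means $r s^2 = 1$ and fixing $a_{04}$ means $r s^3 \cdot \text{(hmm, recheck indices)}$ — more carefully, $a_{20}$ has $(i,j)=(2,0)$ so the condition is $r s^2 = 1$, and $a_{04}$ has $(i,j)=(0,4)$ so the condition is $r t^4 = 1$. Solving, $r = t^{-4}$ and then $s^2 = r^{-1} = t^4$, so $s = \pm t^2$. Hence the stabilizer is $\{(t^{-4}, \pm t^2, t) \mid t \in \bbG_m\} \cong \bbG_m \times \bbZ/2\bbZ$, exactly as asserted in the paragraph preceding the proposition. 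Writing $\epsilon = \pm 1$ for the sign of $s/t^2$, the induced action on a remaining coordinate $a_{ij}$ ($ij \in \bar Q$) is $r s^i t^j = t^{-4}(\epsilon t^2)^i t^j = \epsilon^i t^{2i + j - 4} a_{ij}$, which is the action in the statement. This is the main computational content and it is entirely routine.

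Third — and this is the step requiring the most care — I must check that the $\bbG_m^3$-orbit through a point of $U$ meets $U$ in exactly one $(\bbG_m \times \bbZ/2\bbZ)$-orbit, i.e. that no genuine identifications are lost or spuriously created. Given $P, P' \in U$ with $P' = (r,s,t)\cdot P$, the coordinates $a_{20}, a_{04}$ of both are $1$, forcing $(r,s,t)$ to lie in the stabilizer computed above; conversely any stabilizer element maps $U$ to $U$. This gives a bijection between allowable $\bbG_m^3$-orbits on $\bbA^{12}$ and allowable $(\bbG_m\times\bbZ/2\bbZ)$-orbits on $\bbA^{10}$. One should also confirm that ``allowable'' transports correctly: on $U$ the conditions $a_{20} \neq 0$, $a_{04}\neq 0$ are automatic, so allowability reduces to ``$f_C = 0$ has only ade singularities,'' and this condition is $\bbG_m^3$-invariant (it is intrinsic to the curve), hence in particular invariant under the subgroup. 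Combining these three steps with Proposition \ref{normalA12} yields the claimed $1$-$1$ correspondence.

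The only subtlety worth flagging is bookkeeping with the exponent conventions: one must make sure the exponent $2i + j - 4$ and the sign $\epsilon^i$ are read off with $(i,j)$ matching the convention $a_{ij} \leftrightarrow u^i x^j v^{3-i} y^{4-j}$ used to define the $\bbG_m^3$-action in Proposition \ref{normalA12}, and that the substitution $s = \epsilon t^2$, $r = t^{-4}$ is applied consistently. There is no real obstacle here beyond care with indices; the structural argument — an orbit space for $G$ acting on a space with a $G$-invariant ``non-vanishing'' locus equals an orbit space for the stabilizer of a slice — is completely standard.
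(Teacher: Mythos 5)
Your argument is correct and matches the paper's approach essentially verbatim: the paragraph preceding Proposition~\ref{normalA10} in the paper carries out exactly this normalization ($a_{20}=a_{04}=1$), computes the same stabilizer $\{(t^{-4},\pm t^2,t)\}\cong\bbG_m\times\bbZ/2\bbZ$, and reads off the same induced action $\epsilon^i t^{2i+j-4}$ on the remaining ten coordinates. The slice argument you spell out in your third step is what the paper leaves implicit, so your write-up is a slightly more careful version of the same proof.
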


\section{The $\bbG_m\times\bbZ/2\bbZ$ quotient of $\bbA^{10}$ }
\label{z2}

In order to construct the quotient $\mathbb{A}^{10}/(\bbG_m\times\bbZ/2\bbZ)$
we can first take the quotient by the $\bbZ/2\bbZ$ action;
for this we identify the invariants for that action.
Since the element $(1,-1,1)$ acts on $a_{ij}$ trivially if $i$ is even,
and as multiplication by $-1$ if $i$ is odd, 
it follows that a Laurent monomial $\prod_{ij} a_{ij}^{m_{ij}}$ is invariant if and only if
$m_{12}+m_{13}+m_{14}+m_{31}+m_{32}+m_{33}+m_{34}$ is even.
Therefore, we have the following generators for the invariant Laurent monomials:

\begin{align*}
w_{12} &=a_{12}^2\\
w_{ij} &= a_{ij}/a_{12} \quad ij\in\{13,14,31,32,33,34\}\\
w_{ij} &= a_{ij} \quad ij \in  \{22,23,24\} 
\end{align*}

The only condition on a Laurent monomial in the $w_{ij}$ variables is that,
as monomial in the $a_{ij}$ variables, it has all nonnegative exponents.
Hence we see that the quotient $\mathbb{A}^{10}/(\bbZ/2\bbZ)$
is isomorphic to the affine toric variety $Y=\Spec\,k[N]$,
where $N$ is the monoid of monomials $\prod_{ij} w_{ij}^{n_{ij}}$
whose exponents $\{n_{ij}\}$ satisfy the following ten inequalities

\begin{align}
2n_{12}-n_{13}-n_{14}-n_{31}-n_{32}-n_{33}-n_{34} \geq 0\\
n_{ij}\geq 0 \quad ij\neq 12
\end{align}

That is, 
\[
k[N]\simeq
k[w_{22},w_{23},w_{24}][w_{12},w_{12}w_{ij},w_{12}w_{ij}w_{kl}]
= k[a_{22},a_{23},a_{24},][a_{ij}a_{kl}]
\]
 for $ij,kl\in \{12,13,14,31,32,33,34\}$.

Now, recall the elements $(t^{-4},t^2,t)$ act on $a_{ij}$ as multiplication by $t^{2i+j-4}$;
this same exponent applies to the induced action on $w_{ij}$.
Thus, regrading the ring $k[N]$ so that $w_{ij}$ has degree $2i+j-4$,
we have that
\[
\mathbb{A}^{10}/(\mathbb{G}_m \times \mathbb{Z}/2\mathbb{Z}) 
\simeq (\mathbb{A}^{10}/( \mathbb{Z}/2\mathbb{Z}))/\mathbb{G}_m 
\simeq Y/\mathbb{G}_m 
\simeq \Proj\,k[N]
\]

In particular, the basic open sets
of $\mathbb{A}^{10}/(\mathbb{G}_m \times \mathbb{Z}/2\mathbb{Z})$
are determined by inverting the generators of $k[N]$ of strictly positive degree,
and then taking $\Spec$ of the degree zero part of the corresponding fraction ring.

Given a generator $f=\prod_{ij\in \bar{Q}}w_{ij}^{n_{ij}}\in k[N]$,
a Laurent monomial $\prod_{ij\in \bar{Q}}w_{ij}^{p_{ij}}$ 
lies in the degree zero part of the fraction ring $k[N][1/f]$ 
if and only if the exponents $\{p_{ij}\}$ satisfy
\[
\sum_{ij \in \bar{Q}} (2i+j-4)p_{ij} = 0
\]
(which allows us to solve for $p_{13}$ in terms of the other exponents);
in addition we must have:
\begin{equation}\label{12vector}
2p_{12}+p_{14}+2p_{22}+3p_{23}+4p_{24}+2p_{31}+3p_{32}+4p_{33}+5p_{34}\geq 0
\end{equation}
whenever $2n_{12}=n_{13}+n_{14}+n_{31}+n_{32}+n_{33}+n_{34}$;
\begin{equation}\label{13vector}
2p_{14}+2p_{22}+3p_{23}+4p_{24}+3p_{31}+4p_{32}+5p_{33}+6p_{34}\leq 0
\end{equation}
whenever $n_{13}=0$; and
\begin{equation}\label{ijvectors}
p_{ij}\geq 0
\end{equation}
for $ij\neq 12,13$ whenever $n_{ij}=0$.

Therefore, if one fixes the monomial $f$ to be inverted,
we obtain conditions on the $p_{ij}$'s that yield an affine toric variety;
these toric varieties can then be glued together to give the toric description of $\Proj\, k[N]$.
For this, it is enough to invert the generators of $N$ of positive degree.

Specifically, we work in the $\bbZ^9 \subset \bbR^9$
with coordinates indexed by $\bar{Q}\setminus\{13\} = \{12,14,22,23,24,31,32,33,34\}$.
Given a positive degree monomial generator $f$,
the relevant inequalities (\ref{12vector}), (\ref{13vector}), (\ref{ijvectors}) that apply
define a cone (as the intersection of the half-spaces given by these conditions)
that determine the monoid and thus the coordinate ring of that open subset of the $\Proj$.
The coefficients of these inequalities
(in terms of the $p_{ij}$ coordinates)
give vectors in the dual space that define the dual cones as their convex hull;
this viewpoint realizes the $\Proj$
as being defined by the familiar fan of dual cones.

The following table gives the relevant cones for each positive degree generator of $N$.

\[
\begin{matrix}
\text{generator of }k[N] & \text{inequalities that define the cone} \\
w_{2j}, j=2,3,4 & (\ref{12vector}), (\ref{13vector}), p_{k\ell}\geq 0 \text{ for } k\ell \neq 12,2j \\
w_{12}w_{13} \text{ or } w_{12}^2w_{13} & p_{k\ell}\geq 0 \text{ for } k\ell \neq 12 \\
w_{12}w_{ij} \text{ or } w_{12}^2w_{ij} , ij \in \{14,31,32,33,34\} & (\ref{13vector}), p_{k\ell}\geq 0 \text{ for } k\ell \neq 12, ij \\
w_{12}w_{13}^2 & (\ref{12vector}), p_{k\ell}\geq 0 \text{ for } k\ell \neq 12 \\
w_{12}w_{13}w_{ij}, ij \in \{14,31,32,33,34\} & (\ref{12vector}), p_{k\ell}\geq 0 \text{ for } k\ell \neq 12, ij \\
w_{12}w_{ij}w_{i'j'}, ij, i'j' \in \{14,31,32,33,34\} & (\ref{12vector}), (\ref{13vector}),  p_{k\ell}\geq 0 \text{ for } k\ell \neq 12, ij,i'j' \\
\end{matrix}
\]

The reader will note that some of these cones are the same; that is not a complete surprise.
In addition, some cones are subsets of others, which correspond to larger open subsets of the $\Proj$.  Hence for the $\Proj$ construction we can focus on the minimal cones only.
There are nine of these: the three in the first row, the one in the fourth row, and the five in the sixth row (when $i'j'=ij$) in the above table, and are described by the following:
\begin{align*}
C_{13}: &(\ref{12vector}) \text{ and } p_{k\ell} \geq 0 \text{ for } k\ell \neq 12 \\
C_{ij} (ij \neq 12,13): &(\ref{12vector}), (\ref{13vector}), \text{ and } p_{k\ell} \geq 0 \text{ for } k\ell \neq 12, ij
\end{align*}
(The rationale for the indexing will become clear in the next section.)

This $\Proj$ is not a complete variety however;
the monomial $w_{12}$ has degree zero and appears in every toric coordinate ring.
It gives an affine invariant of the marked RES of index two $Y$, which determines the $J$-invariant of the (reduced) double fiber and has the following geometric interpretation. 

Considering $Y$ as a double cover of $\bbP^1\times \bbP^1$ as in Section \ref{double},
we see that, on the second blowup $\bbF$,
then we have an affine coordinate $\eta$ on the second exceptional curve $e_2$
such that the four branch points on $e_2$ are at $0$, $\infty$, and the two roots of the quadratic equation
$$
\eta^2+w_{12}\eta+1=0
\label{doublept}
$$
This implies that the genus one curve $F_0$ that becomes the reduced double fiber
has $J$-invariant equal to $\frac{4}{3}(3-w_{12}^2)/({4-w_{12}^2})$.

The discriminant of the quadratic is seen here as giving the criterion for the double fiber to be singular: when $w_{12}=\pm 2$, the double fiber must be of type $I_n$ 
for some $1 \leq n\leq 9$ \cite[Proposition 5.1.8]{dc}.

In any case the construction, along with Proposition \ref{normalA10}, 
gives us the following;
it is essentially the first part of Theorem \ref{thm:main}.

\begin{thm}
The subset of $\Proj(k[N])$ corresponding to orbits of allowable branch curves is a moduli space for marked RESs of index two.
\end{thm}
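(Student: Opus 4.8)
The plan is to assemble the pieces already established in the excerpt and verify that the $\Proj$ construction carries the moduli data faithfully. First I would recall that Theorem \ref{isomclassesbranchcurve} gives a bijection between isomorphism classes of marked RESs of index two and isomorphism classes of the relevant $(4,3)$-curves $C$ in $\bbP^1\times\bbP^1$, and that Proposition \ref{normalA10} (via the normal-form reductions of Section \ref{sec:normalforms}) rephrases the latter as the set of allowable $\bbG_m\times\bbZ/2\bbZ$-orbits on $\bbA^{10}$. So the content to be checked is that the open subset of $\Proj(k[N])$ singled out in the statement is exactly (the image of) the allowable orbits, and that it genuinely deserves the name ``moduli space'' — i.e. that it is a geometric quotient with the right functorial meaning on that open locus.

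The key steps, in order, are: (1) Recall the identifications $\bbA^{10}/(\bbZ/2\bbZ)\simeq Y = \Spec k[N]$ and, after the regrading in which $w_{ij}$ has degree $2i+j-4$, $\bbA^{10}/(\bbG_m\times\bbZ/2\bbZ)\simeq Y/\bbG_m \simeq \Proj k[N]$, all of which were derived above. (2) Observe that the ``allowable'' condition ($a_{20}\neq0$, $a_{04}\neq0$, only ade singularities) was already normalized away in passing to $\bbA^{10}$ as far as the first two inequalities go; the residual allowability constraint — that $f_C=0$ has only ade singularities — is an \emph{open} condition on $\bbA^{10}$, invariant under the group action, hence descends to an open subvariety $U\subseteq\Proj k[N]$ whose points are precisely the allowable orbits. (3) Note that on $\bbA^{10}$ the $\bbG_m\times\bbZ/2\bbZ$-action on the allowable locus has finite stabilizers and closed orbits (the grading of $k[N]$ is positive on all generators of positive degree, so the only non-trivial issue is the degree-zero invariant $w_{12}$, which simply means the quotient is a non-trivial $\Proj$ rather than a point — exactly the failure of completeness already flagged), so that the $\Proj$ is a geometric quotient of the allowable locus. (4) Combine with Theorem \ref{isomclassesbranchcurve} and Proposition \ref{normalA10} to conclude that $U$ is in natural bijection with isomorphism classes of marked RESs of index two, and that the universal family of branch curves over $U$ pulls back the double-cover construction of Section \ref{double} to a family of marked RESs, giving $U$ the structure of a coarse (indeed fine, on the locus with trivial automorphisms) moduli space.

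I would present this essentially as a corollary-style argument: ``By Proposition \ref{normalA10}, isomorphism classes of branch curves $C$ correspond to allowable $\bbG_m\times\bbZ/2\bbZ$-orbits in $\bbA^{10}$. The allowability condition is open and invariant, so it cuts out an open subset $U$ of the geometric quotient $\bbA^{10}/(\bbG_m\times\bbZ/2\bbZ)\simeq\Proj k[N]$ computed above. By Theorem \ref{isomclassesbranchcurve} these orbits are in $1$-$1$ correspondence with isomorphism classes of marked RESs of index two, whence $U\subseteq\Proj k[N]$ is the desired moduli space.'' Then I would add a sentence recording that the double-cover construction is functorial in families (a curve $C$ varying in a family over a base $B$ produces, by the construction in Section \ref{double}, a family of marked RESs over $B$, and conversely), which is what upgrades the set-theoretic bijection to a moduli-space statement.

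The main obstacle is step (4): making precise in what sense this is a moduli \emph{space} rather than merely a set-theoretic parameter space. The set-level bijection is immediate from the cited results, but to call the $\Proj$ a moduli space one should either (a) confirm that the double-cover/blow-down construction of Section \ref{double} works in families — i.e. is compatible with base change and produces a flat family of RESs over any base mapping to $U$ — so that the universal branch curve over the GIT quotient yields a (coarse, or fine away from extra automorphisms) moduli space, or (b) be content with a coarse statement and check the universal properties of the GIT/toric quotient directly. I would opt for (a) but note that the construction of Section \ref{double} is manifestly relative (double covers, blow-ups along sections, and blow-downs of $(-1)$-curves all commute with base change), so the families version is routine; the only genuinely new verification is that ``has only ade singularities'' is an open condition on the base, which follows from upper-semicontinuity of the Milnor/Tjurina number in flat families of curves. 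The remaining bookkeeping — matching the minimal cones $C_{ij}$ to the affine charts and confirming $w_{12}$ is the unique degree-zero invariant, hence the unique source of non-completeness — has already been done in the text and need only be invoked.
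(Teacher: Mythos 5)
Your proposal matches the paper's approach: the paper states this theorem without a separate proof, noting only that it follows from the preceding $\Proj(k[N])$ construction together with Proposition~\ref{normalA10}, which is precisely the chain of identifications (Theorem~\ref{isomclassesbranchcurve} $\rightarrow$ Proposition~\ref{normalA10} $\rightarrow$ the $\Proj k[N]$ quotient) that you assemble. Your additional remarks on the openness and $\bbG_m\times\bbZ/2\bbZ$-invariance of the ade condition, and on the compatibility of the double-cover construction of Section~\ref{double} with base change, go beyond what the paper records but correctly supply the detail needed to promote the set-theoretic bijection to an actual moduli-space statement.
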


\section{The $\bbG_m^3$ quotient of $\bbA^{12}$}
\label{sec:A12quotient}

For this analysis we generally follow the prescriptions of Chapter 12 of \cite{invariant}
which we find convenient;
the reader may also consult Chapter 14 of \cite{cox}.

The GIT quotient of the (diagonalized) action of $\bbG_m^3$ on $\bbA^{12}$ 
as described in Proposition \ref{normalA12} 
is a toric variety which depends on the choice of a $\bbG_m^3$-linearization $L_\chi$ 
of the trivial line bundle, 
which is determined by a character $\chi$ of $\bbG_m^3$.
Such a character may be written as $(r,s,t)\mapsto r^{\alpha_1}s^{\alpha_2}t^{\alpha_3}$,
and depends on the three integers $\alpha_i$.

Briefly, the construction is as follows.
Given a monomial $f:= \prod_{ij \in Q} a_{ij}^{m_{ij}}$,
the action of an element $(r,s,t) \in \bbG_m^3$ is of the form
$
(r,s,t)\cdot f = r^{\beta_1}s^{\beta_2}t^{\beta_3} f
$
where the $\beta_i$'s are determined by
\begin{equation}
A\cdot \underline{m} = \begin{pmatrix}
1& 1& 1& 1& 1& 1& 1& 1& 1& 1& 1& 1\\
2& 0& 1& 1& 1& 2& 2& 2& 3& 3& 3& 3 \\
0& 4& 2& 3& 4& 2& 3& 4& 1& 2& 3& 4
\end{pmatrix}
\cdot \underline{m} =
\begin{pmatrix}
\beta_1 \\
\beta_2\\
\beta_3
\end{pmatrix}
\label{system}
\end{equation}
where $\underline{m} = (m_{20},m_{04},m_{12},m_{13},m_{14},m_{22},m_{23},m_{24},m_{31},m_{32},m_{33},m_{34})^t$.

Such a monomial is then $\chi$-homogeneous of degree $d$
if $\beta_i = d\alpha_i$ for each $i$,
which gives the equation
\begin{equation}\label{Asystem}
A \cdot \underline{m} = d\underline{\alpha};
\end{equation}
these monomials form the ring of invariant sections of the linearized line bundle $L_\chi$
which determines the toric GIT quotient.

In particular, if we define $S_d$ to be the space of monomials
$\underline{a}^{\underline{m}}=\prod_{ij \in Q} a_{ij}^{m_{ij}}$,
where $\underline{m}$ is a solution to (\ref{Asystem}) with $m_{ij}$ and $d$ non-negative,
we obtain an isomorphism of finitely generated $k-$algebras
\[
k[S]\doteq \bigoplus_{d\geq 0} k[S_d] \simeq \bigoplus_{d\geq 0} \Gamma(\mathbb{A}^{12}, \mathcal{L}_{\chi}^{\otimes d})^{\bbG_m^3}
\]
and the GIT quotient $\bbA^{12}//_\chi \bbG_m^3$ determined by the character $\chi$
is the $\Proj$ of this graded ring.

Note that the rows of $A$ determine a particular embedding $\bbG_m^3 \hookrightarrow \bbG_m^{12}$, hence $A$ induces a map between the corresponding character groups, and we have the following short exact sequence
\begin{equation}
\xymatrix{0 \ar[r] & M \ar[r]^{B^{t}} & \mathbb{Z}^{12} \ar[r]^A & \mathbb{Z}^3 \ar[r] & 0}
\label{ses}
\end{equation}
where $B$ is the matrix whose rows give us a basis for the kernel lattice of $A$, which we denote by $M$, and $B^t$ denotes the transpose of $B$. 

Concretely, we may take as $B$ the matrix 
\begin{equation}
\begin{pmatrix}
B_{20} & B_{04} & B_{12} & B_{13} & B_{14} & B_{22} & B_{23} & B_{24} & B_{31} & B_{32} & B_{33} & B_{34} \\
-1& -1& 2& 0& 0& 0& 0& 0& 0& 0& 0& 0 \\ 
0& 0& 1& -2& 1& 0& 0& 0& 0&  0& 0& 0 \\
-1& 0& 2& -2& 0& 1& 0& 0& 0& 0& 0& 0 \\
-1& 0& 3& -3& 0& 0& 1& 0& 0& 0& 0& 0\\
-1& 0& 4& -4& 0& 0& 0& 1& 0& 0& 0& 0\\
-1& 1& 2& -3& 0& 0& 0& 0& 1& 0& 0& 0\\
-1& 1& 3& -4& 0& 0& 0& 0& 0& 1& 0& 0\\
-1& 1& 4& -5& 0& 0& 0& 0& 0& 0& 1& 0\\
-1& 1& 5& -6& 0& 0& 0& 0& 0& 0& 0& 1
\end{pmatrix}
\label{matrixB}
\end{equation}

The twelve columns of $B$ will determine the cones of the fan that exhibits the GIT quotient $\mathbb{A}^{12}//_{\chi} \bbG_m^3$ as a toric variety.  We note that the final ten columns correspond exactly to the conditions (\ref{12vector}), (\ref{13vector}), (\ref{ijvectors}) 
that were found in our toric description of the $\Proj$ defining the quotient
of $\bbA^{10}$ by the $\bbG_m\times\bbZ/2\bbZ$ group.

\subsection{Support sets and the semistable locus}

Given a monomial $\underline{a}^{\underline{m}}$ which lies in $S_d$ for some $d$, 
we define $D(\underline{m})$ to be the invariant open set
consisting of all the points of $\bbA^{12}$ 
where $\underline{a}^{\underline{m}}$ does not vanish. 
Thus, by definition, we have
\[
(\bbA^{12})_{\chi}^{ss} = \bigcup_{d\geq 0} \bigcup_{\underline{a}^{\underline{m}} \in S_d} D(\underline{m})
\]

Since $D(\underline{m})$ only depends on the set of indices $ij$ for which $m_{ij}\neq 0$, 
it is convenient to define the support of a monomial $\underline{a}^{\underline{m}}$ 
to be the subset of the set of indices $Q$
where the variable $a_{ij}$ appears with a strictly positive exponent:
\[
\Supp(\underline{a}^{\underline{m}}) := \{ij\;|\;m_{ij}>0\}.
\]

To any subset $I \subset Q$ we can then associate an invariant open
$D(I)\subset \mathbb{A}^{12}$
consisting of points with non-zero coordinates indexed precisely by $I$.
And since $D(\underline{m})=D(I)$ for $I=\Supp(\underline{a}^{\underline{m}})$,
and $D(I) \subset D(J)$ if $J \subset I$,
it follows that
\[
(\mathbb{A}^{12})_{\chi}^{ss} = \bigcup_{\text{minimal}\,I\in\Supp(S)} D(I)
\]
where $\Supp(S)$ denotes the poset formed by the set of all support sets of all invariant monomials in $k[S]$.

Now, if $I$ is a support set for an invariant monomial in $S_d$ for some $d$, 
then $D(I)$ is an affine variety with coordinate ring isomorphic to $k[Z_{I}]$, 
where $Z_{I}$ is the monoid of monomials $\underline{a}^{\underline{m}}$ 
such that $m_{ij}\geq 0$ precisely when $ij \notin I$. 

Since $D(I)$ is invariant under the action of $\bbG_m^3$, 
we can consider the corresponding quotient $D(I)/\bbG_m^3$, 
whose coordinate ring is the subring of $k[Z_{I}]$ generated by invariant monomials. 
A monomial $\underline{a}^{\underline{m}}\in k[Z_I]$ is invariant if and only if 
$A\cdot \underline{m}=0$, i.e. $\underline{m}$ lies in the kernel lattice $M$. 
Thus, letting $M_{I}=\{\underline{a}^{\underline{m}}\in k[Z_I]\,;\, \underline{m} \in M\}$ 
we see that the coordinate ring of $D(I)/\bbG_m^3$ is $k[M_{I}]$, 
which we can easily recognize as the coordinate ring of an affine toric variety.

Concretely, let $\{e_{ij}\}$ denote the standard basis of $\mathbb{Z}^{12}$ 
(indexed by $Q$) 
and let $\{e_{ij}^*\}$ denote the dual basis. 
A monomial $\underline{a}^{\underline{m}}$ lies in $k[M_{I}]$ if and only if 
$m_{ij}=e_{ij}^*(\underline{m})\geq 0$ whenever $ij\notin I$. 
That is, $M_{I}$ is determined by an intersection of half-spaces -- a cone. 
The dual cone, which we denote by $\sigma_I$, 
is the cone generated by the columns of $B$ which are indexed by $Q \setminus I$.

The set of all cones $\{\sigma_I\}$ 
where $I$ runs over all the minimal support sets in $\Supp(S)$ 
form a fan $\Sigma_{\chi}$ in $M^* \otimes \mathbb{R}\simeq \mathbb{R}^9$ 
and the quotient $(\mathbb{A}^{12})^{ss}//_\chi \bbG_m^3$ 
is the toric variety associated to this fan. 

When $\Sigma_{\chi}$ is simplicial there are no strictly semi-stable points \cite[Proposition 12.1]{invariant} and $(\mathbb{A}^{12})^{ss}//_\chi \bbG_m^3=(\mathbb{A}^{12})^{s}/_\chi \bbG_m^3$ is a geometric quotient.

\subsection{Support sets and the secondary fan} 

As the character $\chi$ varies, 
the GIT quotient $\bbA^{12}//_\chi \bbG_m^3$ changes, 
but there are only finitely many distinct quotients up to isomorphism. 
These are parametrized by finitely many disjoint chambers lying in a fan whose support is the convex cone generated by all the columns of $A$. This fan, called the secondary fan, lives inside Hom$(\bbG_m^3,\bbG_m)\otimes \mathbb{R}\simeq \mathbb{R}^3$, 
and on each chamber the quotient $\bbA^{12}//_\chi \bbG_m^3$ does not change. 
Since each column of $A$ has first coordinate equal to one, 
this chamber decomposition can be represented by Figure \ref{chambers}:

\begin{figure}[h]
\begin{tikzpicture}[line cap=round,line join=round,>=triangle 45,x=1cm,y=1cm,scale=2]
\clip(-0.1,-0.1) rectangle (3.1,4.1);
\draw [line width=0.5pt] (1,2)-- (2,0);
\draw [line width=0.5pt] (2,0)-- (3,1);
\draw [line width=0.5pt] (3,1)-- (1,2);
\draw [line width=0.5pt] (1,3)-- (2,0);
\draw [line width=0.5pt] (1,2)-- (1.4,1.8);
\draw [line width=0.5pt] (1.4,1.8)-- (2,0);
\draw [line width=0.5pt] (2,0)-- (1,2);
\draw [line width=0.5pt] (0,4)-- (1,2);
\draw [line width=0.5pt] (0,4)-- (1,3);
\draw [line width=0.5pt] (0,4)-- (3,4);
\draw [line width=0.5pt] (3,4)-- (3,1);
\draw [line width=0.5pt] (3,4)-- (2,2);
\draw [line width=0.5pt] (3,4)-- (1,2);
\draw [line width=0.5pt] (2,4)-- (1,3);
\draw [line width=0.5pt] (1,4)-- (1,2);
\draw [line width=0.5pt] (1,4)-- (3,2);
\draw [line width=0.5pt] (1,4)-- (3,3);
\draw [line width=0.5pt] (2,4)-- (2,0);
\draw [line width=0.5pt] (1,2)-- (3,2);
\draw [line width=0.5pt] (3,3)-- (2,0);
\draw [line width=0.5pt] (3,3)-- (2,2);
\draw [line width=0.5pt] (3,3)-- (1,2);
\draw [line width=0.5pt] (3,3)-- (1,3);
\draw [line width=0.5pt] (3,3)-- (0,4);
\draw [line width=0.5pt] (1,4)-- (2,2);
\draw [line width=0.5pt] (1,4)-- (2,0);
\draw [line width=0.5pt] (1,4)-- (3,1);
\draw [line width=0.5pt] (2,2)-- (3,1);
\draw [line width=0.5pt] (1,3)-- (2,2);
\draw [line width=0.5pt] (1,3)-- (3,4);
\draw [line width=0.5pt] (2,4)-- (3,3);
\draw [line width=0.5pt] (2,4)-- (1,2);
\draw [line width=0.5pt] (2,4)-- (3,2);
\draw [line width=0.5pt] (2,4)-- (3,1);
\draw [line width=0.5pt] (1,3)-- (3,2);
\draw [line width=0.5pt] (2,3)-- (3,1);
\draw [line width=0.5pt] (3,2)-- (2,0);
\draw [line width=0.5pt] (2,3)-- (0,4);
\draw [line width=0.5pt] (0,4)-- (3,2);
\draw [line width=0.5pt] (2,0)-- (1.5714285714285714,1.7142857142857142);
\draw [line width=0.5pt] (1.5714285714285714,1.7142857142857142)-- (1.4,1.8);
\draw [line width=0.5pt] (1.4,1.8)-- (2,0);
\draw [line width=0.5pt] (3,4)-- (2,0);
\draw [line width=0.5pt] (2.6,1.2)-- (2,0);
\draw [line width=0.5pt] (2,0)-- (3,1);
\draw [line width=0.5pt] (3,1)-- (2.6,1.2);
\draw [line width=0.5pt] (0,4)-- (1,2);
\draw [line width=0.5pt] (0,4)-- (1,3);
\draw [line width=0.5pt] (1,2)-- (1,3);
\begin{scriptsize}
\draw [fill=blue] (3,4) circle (1pt);
\draw [fill=blue] (3,3) circle (1pt);
\draw [fill=blue] (3,2) circle (1pt);
\draw [fill=blue] (3,1) circle (1pt);
\draw [fill=blue] (2,4) circle (1pt);
\draw [fill=blue] (2,3) circle (1pt);
\draw [fill=blue] (2,2) circle (1pt);
\draw [fill=blue] (2,0) circle (1pt);
\draw [fill=blue] (1,4) circle (1pt);
\draw [fill=blue] (1,3) circle (1pt);
\draw [fill=blue] (1,2) circle (1pt);
\draw [fill=blue] (0,4) circle (1pt);
\end{scriptsize}
\end{tikzpicture}
\caption{Chambers}
\label{chambers}
\end{figure}
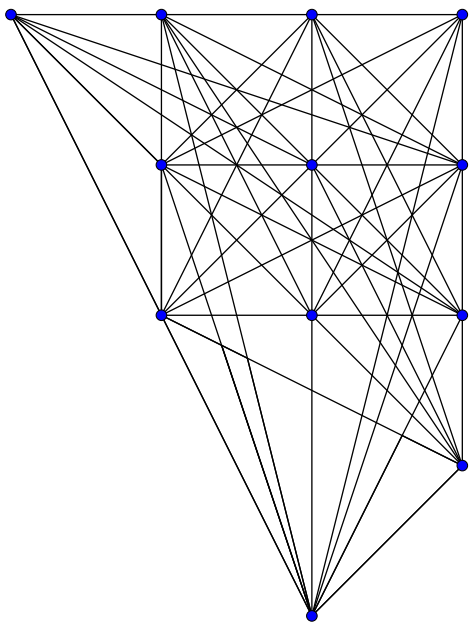

Given any character $\chi$ we can then identify the corresponding support sets using Lemma \ref{support} below.
For a finite set of points $P = \{g_i\} \subset \bbR^2$,
we define the \emph{strict convex hull} of the set $P$
to be the set of points of the form $\sum_i r_i g_i$ where $\sum_i r_i =1$
and each $r_i$ is strictly positive.

\begin{lm}
Given a character $\chi: (r,s,t) \mapsto r^{\alpha_1}s^{\alpha_2}t^{\alpha_3}$,
with $\alpha_1>0$, let $b=\alpha_2/\alpha_1$ and $c=\alpha_3/\alpha_1$.
A subset $I \subset Q$ is a support set for an invariant monomial in $S_d$ for some $d$ 
if and only if the point $(b,c)$ lies in the strict convex hull of $\{(i,j)\,;\,ij \in I\}$.
\label{support}
\end{lm}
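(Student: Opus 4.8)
The plan is to translate the existence of an invariant monomial in $S_d$ directly into a statement about a point lying in a convex hull, by unwinding the definitions and using the structure of the matrix $A$. Recall from (\ref{Asystem}) that a monomial $\underline{a}^{\underline{m}}$ with support contained in $I$ is an invariant section of $L_\chi$ of degree $d$ precisely when $A\cdot\underline{m}=d\underline{\alpha}$ with all $m_{ij}\geq 0$ and $d\geq 0$; and we may take the support to be all of $I$ by perturbing $\underline{m}$ if necessary (adding to $\underline{m}$ a small positive multiple of any solution whose support is exactly $I$, which exists iff one with support contained in $I$ does — we will need to check this reduction, or simply phrase the conclusion in terms of support $\subseteq I$ and note the equivalence). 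The first row of $A$ is all ones, so the equation $A\cdot\underline{m}=d\underline{\alpha}$ forces $\sum_{ij}m_{ij}=d\alpha_1$; when $\alpha_1>0$ and $d>0$ this sum is positive, so we may normalize.

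First I would set $r_{ij}=m_{ij}/(d\alpha_1)$ for $ij\in I$. Then $r_{ij}\geq 0$ and $\sum_{ij\in I}r_{ij}=1$, so $(r_{ij})$ is a convex-combination vector. The second and third rows of $A$, applied to $\underline{m}$ and divided by $d\alpha_1$, read
\[
\sum_{ij\in I} r_{ij}\, i = \frac{\alpha_2}{\alpha_1}=b,\qquad \sum_{ij\in I} r_{ij}\, j = \frac{\alpha_3}{\alpha_1}=c,
\]
which says exactly that $(b,c)=\sum_{ij\in I}r_{ij}(i,j)$ lies in the convex hull of $\{(i,j):ij\in I\}$. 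This gives the forward direction. For the converse, if $(b,c)$ lies in the strict convex hull, we have a representation $(b,c)=\sum_{ij\in I}r_{ij}(i,j)$ with all $r_{ij}>0$ and $\sum r_{ij}=1$; clearing denominators (the $r_{ij}$ may be taken rational, since the strict convex hull of finitely many rational points, if nonempty, contains rational points — e.g. perturb a rational point of the full hull toward a given strictly-interior rational point) we obtain positive integers $m_{ij}$ and a positive integer $d$ with $\sum m_{ij}=d\alpha_1$, $\sum m_{ij}\,i=d\alpha_2$, $\sum m_{ij}\,j=d\alpha_3$; that is $A\cdot\underline{m}=d\underline{\alpha}$ with $\underline{m}>0$ on $I$ and zero off $I$, so $\underline{a}^{\underline{m}}\in S_d$ has support exactly $I$.

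The main obstacle is the rationality/integrality bookkeeping in the converse: one must produce an \emph{integer} solution with support \emph{exactly} $I$, not merely a real one with support contained in $I$. The cleanest way is to argue that the strict convex hull of the rational points $\{(i,j):ij\in I\}$, when nonempty, is a nonempty (relatively) open subset of its affine span, hence contains a rational point $(b',c')$ arbitrarily close to $(b,c)$; but since we are given that $(b,c)$ itself equals $(\alpha_2/\alpha_1,\alpha_3/\alpha_1)$ is rational, a short argument shows $(b,c)$ admits a rational strictly-positive barycentric representation directly (average a rational representation of $(b,c)$ as a point of the hull with the barycenter of all vertices of $I$). Once rational weights with the right denominators are in hand, scaling to integers is immediate, and the equivalence between "support $\subseteq I$" and "support $= I$" for the purposes of identifying \emph{minimal} support sets is then routine and compatible with the poset $\Supp(S)$. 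I would also remark that the condition is independent of the chosen $d$, which is transparent from this homogeneous-barycentric description.
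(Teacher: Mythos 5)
Your proposal takes the same approach as the paper: divide the invariant-monomial equation $A\cdot\underline{m}=d\underline{\alpha}$ by $d\alpha_1$, use the all-ones first row of $A$ to see $\sum r_{ij}=1$, and read the remaining two rows as saying $(b,c)=\sum r_{ij}(i,j)$. That is exactly the paper's argument, so the core is correct.

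Two points worth tightening. First, the forward direction is overcomplicated by the ``support contained in $I$ vs.\ equal to $I$'' discussion, and the parenthetical ``adding a small positive multiple of any solution whose support is exactly $I$, which exists iff one with support contained in $I$ does'' is circular as written. None of this is needed: the lemma hypothesizes that $I$ \emph{is} the support of an invariant monomial, so all $m_{ij}>0$ for $ij\in I$, hence $r_{ij}=m_{ij}/(d\alpha_1)>0$ (not merely $\geq 0$ as you wrote), and $(b,c)$ lands in the \emph{strict} convex hull directly. Second, the proposed rationality fix in the converse (``average a rational representation of $(b,c)$ with the barycenter of all vertices of $I$'') is incorrect: that average represents $(1-\epsilon)(b,c)+\epsilon(\text{barycenter})$, not $(b,c)$ itself. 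The correct argument is that the set of barycentric coordinates for $(b,c)$ --- solutions $(r_{ij})$ of $\sum r_{ij}=1$ and $\sum r_{ij}(i,j)=(b,c)$ --- is a rational affine subspace (since $(b,c)$ and all $(i,j)$ are rational); the hypothesis provides a real solution in the open orthant, which is an open condition, and an open nonempty subset of a rational affine subspace contains rational points. After that, scaling to clear denominators as you describe gives the desired integer exponents with support exactly $I$. (The paper itself is terse here, simply remarking at the start that ``all numbers involved are rational.'')
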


\begin{proof}
We note that all numbers involved are rational.

First suppose $I = \Supp(\prod_{ij \in I} a_{ij}^{m_{ij}})$, 
where each $m_{ij}$ appearing here is at least one,
and the monomial is invariant.
The invariance implies that we have the equation (\ref{Asystem}), 
which we may then write as
\[
\sum_{ij \in I} m_{ij}\begin{pmatrix} 1 \\ i \\ j \end{pmatrix} =
d\begin{pmatrix} \alpha_1 \\ \alpha_2 \\ \alpha_3 \end{pmatrix}.
\]
Dividing all quantities by $d\alpha_1$ gives
\[
\sum_{ij \in I} \frac{m_{ij}}{d\alpha_1} \begin{pmatrix} 1 \\ i \\ j \end{pmatrix} =
\begin{pmatrix} 1 \\ b \\ c \end{pmatrix}.
\]
If we define $r_{ij} = m_{ij} /d \alpha_1$, 
we have $\sum_{ij \in I} r_{ij} = 1$, and
$\sum_{ij \in I} r_{ij} \begin{pmatrix} i \\ j \end{pmatrix} = \begin{pmatrix} b \\ c \end{pmatrix}$,
which shows that the point $(b,c) \in \bbR^2$ is in the strict convex hull of the points $(i,j)$ in $I$ as required.

Conversely, if $(b,c)$ is in the strict convex hull,
then by definition there are strictly positive rational numbers $r_{ij}$ such that
\[
\sum_{ij \in I} r_{ij} \begin{pmatrix} 1 \\ i \\ j \end{pmatrix} =
\begin{pmatrix} 1 \\ b \\ c \end{pmatrix}
\]
and multiplying through by $\alpha_1$ gives
\[
\sum_{ij \in I} \alpha_1 r_{ij} \begin{pmatrix} 1 \\ i \\ j \end{pmatrix} =
\begin{pmatrix} \alpha_1 \\ \alpha_2 \\ \alpha_3 \end{pmatrix}
\]
Now choose $d$ so that $m_{ij} = d \alpha_1 r_{ij} \in \bbZ$ for all $ij \in I$.
Multiplying through by $d$ gives the equation (\ref{Asystem})
and so the monomial $\prod_{ij \in I} a_{ij}^{m_{ij}}$ is in $S_d$, 
and it is invariant.
The strictness gives us that $r_{ij}$, and hence $m_{ij}$, 
is strictly positive for each $ij \in I$,
and so $I$ is indeed the support set of this monomial.
\end{proof}

We also prove

\begin{lm}
If $I$ is a minimal support set for an invariant monomial in $S_d$ for some $d$, 
then $|I|\leq 3$.
\label{size}
\end{lm}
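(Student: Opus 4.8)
The plan is to reduce the statement to Carath\'{e}odory's theorem in the plane, applying Lemma~\ref{support} in both directions. Fix the character $\chi$ underlying the spaces $S_d$ and set $b=\alpha_2/\alpha_1$, $c=\alpha_3/\alpha_1$. First I would note that $\alpha_1>0$ automatically: if $I$ is the support of an invariant monomial $\underline{a}^{\underline{m}}\in S_d$, then $I\neq\emptyset$, and since the first row of $A$ in (\ref{system}) is all ones, the relation (\ref{Asystem}) gives $\sum_{ij}m_{ij}=d\alpha_1\geq 1$, whence $d\geq 1$ and $\alpha_1>0$. So Lemma~\ref{support} applies, and it tells us exactly that the point $(b,c)\in\bbR^2$ lies in the strict convex hull of $P_I:=\{(i,j)\;|\;ij\in I\}$; in particular $(b,c)$ lies in the ordinary convex hull of $P_I$.

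Next I would invoke Carath\'{e}odory's theorem: since $P_I\subset\bbR^2$ and $(b,c)$ lies in the convex hull of $P_I$, there is a subset $J\subseteq I$ with $|J|\leq 3$ such that $(b,c)$ is a convex combination of the points of $P_J$. Discarding from $J$ the indices whose coefficient in this combination is zero, we obtain $J_0\subseteq J\subseteq I$ with $|J_0|\leq 3$ and with $(b,c)$ lying in the \emph{strict} convex hull of $P_{J_0}$. By Lemma~\ref{support} in the other direction, $J_0$ is itself a support set for an invariant monomial in $S_{d'}$ for some $d'$.

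Finally, since $J_0\subseteq I$ and $I$ is minimal among such support sets, we must have $J_0=I$, and therefore $|I|=|J_0|\leq 3$, as claimed. I do not expect a genuine obstacle here; the only points requiring a little care are the passage from the convex hull to the strict convex hull — handled by discarding the zero-coefficient vertices, which can only decrease the cardinality — and the observation that the hypothesis $\alpha_1>0$ of Lemma~\ref{support} is free once $I$ is nonempty. If one prefers to avoid quoting Carath\'{e}odory by name, one can instead take $J_0\subseteq I$ of minimal cardinality with $(b,c)$ in the convex hull of $P_{J_0}$; if $|J_0|\geq 4$ the vectors $\{(1,i,j)\;|\;ij\in J_0\}$ are linearly dependent in $\bbR^3$, so one may add a multiple of a nontrivial linear relation to the coefficient vector and move along it until some coefficient hits zero, contradicting minimality — which is precisely Carath\'{e}odory specialized to this setting.
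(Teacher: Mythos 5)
Your proof is correct and takes essentially the same route as the paper: both pass through Lemma~\ref{support} to translate minimality of a support set into a statement about strict convex hulls, and then shrink the convex combination to at most three points. The paper carries out the planar Carath\'eodory step by hand (triangulating the polygon $P$ from one vertex and doing a case analysis), whereas you quote Carath\'eodory's theorem outright and then discard zero-coefficient vertices to recover strictness; your version is slightly cleaner, handles the degenerate (collinear) configuration without the paper's implicit assumption that $(b,c)$ lies in the two-dimensional interior of $P$, and correctly notes that the hypothesis $\alpha_1>0$ of Lemma~\ref{support} is automatic from the first row of $A$ once $I\neq\emptyset$.
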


\begin{proof}
Suppose $|I| \geq 4$.
Let $P$ be the polygon which is the convex hull of $I$,
so that $(b,c)$ is in the interior of $P$.
Let $p_1,\ldots,p_k$ be the vertices of $P$, 
in clockwise order around the boundary of $P$.
Note that each $p_i$ is in $I$.
If $(b,c)$ lies on one of the interior line segments of $P$ joining $p_1$ to $p_i$ 
(for $3 \leq i \leq k-1$),
then $J = \{p_1,p_i\}$ contains $(b,c)$ in the interior of its strict convex hull
(the line segment),
and so $I$ is not minimal.

If $(b,c)$ does not lie on any of those interior line segments, 
then $(b,c)$ must be in the interior of one of the triangles with vertices 
$p_1, p_i, p_{i+1}$ for some $i$ with $2 \leq i \leq k-1$
(where we set $p_{k+1}=p_1$).
Hence those three vertices give a smaller set of indices than $I$ which are in $\Supp(S)$,
and again we conclude that $I$ is not minimal.
\end{proof}

In particular, 

\begin{prop}
\end{prop}
\begin{enumerate}[(i)]
\item $(\bbA^{12})^{ss}//_\chi \bbG_m^3=\emptyset$ 
for every character $\chi$ such that the point $(b,c)$ 
lies outside the quadrilateral in Figure \ref{chambers}. 
\item $(\bbA^{12})^{ss}//_\chi \bbG_m^3=(\bbA^{12})^{s}/_\chi \bbG_m^3$ 
for every character $\chi$ such that the point $(b,c)$ does not lie in one of the line segments in Figure \ref{chambers}. 
\item The semistable locus $(\bbA^{12})_{\chi}^{ss}$ always contains the open subset 
\[
X\doteq \{(a_{ij})_{ij \in Q}
\,;\,a_{04},a_{20},a_{31},a_{34}\neq 0\}
\]
for any choice of character $\chi$ corresponding to a point $(b,c)$ 
that lies in the interior of the quadrilateral in Figure \ref{chambers}.
\end{enumerate}

\begin{proof}
\begin{enumerate}[(i)]
\item It follows from Lemma \ref{support}.
\item It follows from Lemma \ref{size} and \cite[Proposition 12.1]{invariant}
\item It suffices to observe that for any such character $\chi$, if $(a_{ij})_{ij \in Q}\in X$, then $(a_{ij})_{ij \in Q}$ cannot be unstable. In fact, we can always find a 
support set $I$ so that $(a_{ij})_{ij \in Q}\notin V(I)$. This is because either $\{04,20,31\}$ or $\{04,31,34\}$ must be a support set  for some invariant monomial, independent of the choice of $\chi$.  
\end{enumerate}
\end{proof}

\subsection{The quotients that compactify the $\bbG_m \times \bbZ/2\bbZ$ quotient}

Among the possible chambers in Figure \ref{chambers} 
we are interested in those whose corresponding quotients  
compactify the $\bbG_m \times \bbZ/2\bbZ$ quotient we described in Section \ref{z2}. These special chambers are highlighted in Figure \ref{colors} below.

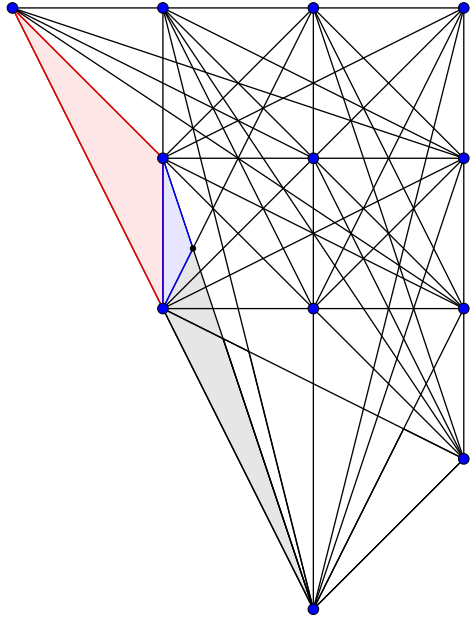
\begin{figure}[h]
\begin{tikzpicture}[line cap=round,line join=round,>=triangle 45,x=1cm,y=1cm,scale=2]
\clip(-0.1,-0.1) rectangle (3.1,4.1);
\fill[line width=0.5pt,,fill=black,fill opacity=0.1] (1,2) -- (1.2,2.4) -- (2,0) -- cycle;
\fill[line width=0.5pt,color=red,fill=red,fill opacity=0.1] (0,4) -- (1,2) -- (1,3) -- cycle;
\fill[line width=0.5pt,color=blue,fill=blue,fill opacity=0.1] (1.2,2.4) -- (1,2) -- (1,3) -- cycle;
\draw [line width=0.5pt] (1,2)-- (2,0);
\draw [line width=0.5pt] (2,0)-- (3,1);
\draw [line width=0.5pt] (3,1)-- (1,2);
\draw [line width=0.5pt] (1,3)-- (2,0);
\draw [line width=0.5pt,] (1,2)-- (1.4,1.8);
\draw [line width=0.5pt,] (1.4,1.8)-- (2,0);
\draw [line width=0.5pt,] (2,0)-- (1,2);
\draw [line width=0.5pt] (0,4)-- (1,2);
\draw [line width=0.5pt] (0,4)-- (1,3);
\draw [line width=0.5pt] (0,4)-- (3,4);
\draw [line width=0.5pt] (3,4)-- (3,1);
\draw [line width=0.5pt] (3,4)-- (2,2);
\draw [line width=0.5pt] (3,4)-- (1,2);
\draw [line width=0.5pt] (2,4)-- (1,3);
\draw [line width=0.5pt] (1,4)-- (1,2);
\draw [line width=0.5pt] (1,4)-- (3,2);
\draw [line width=0.5pt] (1,4)-- (3,3);
\draw [line width=0.5pt] (2,4)-- (2,0);
\draw [line width=0.5pt] (1,2)-- (3,2);
\draw [line width=0.5pt] (3,3)-- (2,0);
\draw [line width=0.5pt] (3,3)-- (2,2);
\draw [line width=0.5pt] (3,3)-- (1,2);
\draw [line width=0.5pt] (3,3)-- (1,3);
\draw [line width=0.5pt] (3,3)-- (0,4);
\draw [line width=0.5pt] (1,4)-- (2,2);
\draw [line width=0.5pt] (1,4)-- (2,0);
\draw [line width=0.5pt] (1,4)-- (3,1);
\draw [line width=0.5pt] (2,2)-- (3,1);
\draw [line width=0.5pt] (1,3)-- (2,2);
\draw [line width=0.5pt] (1,3)-- (3,4);
\draw [line width=0.5pt] (2,4)-- (3,3);
\draw [line width=0.5pt] (2,4)-- (1,2);
\draw [line width=0.5pt] (2,4)-- (3,2);
\draw [line width=0.5pt] (2,4)-- (3,1);
\draw [line width=0.5pt] (1,3)-- (3,2);
\draw [line width=0.5pt] (2,3)-- (3,1);
\draw [line width=0.5pt] (3,2)-- (2,0);
\draw [line width=0.5pt] (2,3)-- (0,4);
\draw [line width=0.5pt] (0,4)-- (3,2);
\draw [line width=0.5pt] (2,0)-- (1.5714285714285714,1.7142857142857142);
\draw [line width=0.5pt] (1.5714285714285714,1.7142857142857142)-- (1.4,1.8);
\draw [line width=0.5pt] (1.4,1.8)-- (2,0);
\draw [line width=0.5pt] (3,4)-- (2,0);
\draw [line width=0.5pt] (2.6,1.2)-- (2,0);
\draw [line width=0.5pt] (2,0)-- (3,1);
\draw [line width=0.5pt] (3,1)-- (2.6,1.2);
\draw [line width=0.5pt,color=red] (0,4)-- (1,2);
\draw [line width=0.5pt,color=red] (0,4)-- (1,3);
\draw [line width=0.5pt,color=red] (1,2)-- (1,3);
\draw [line width=0.5pt,color=blue] (1,2)-- (1,3);
\draw [line width=0.5pt,color=blue] (1.2,2.4)-- (1,3);
\draw [line width=0.5pt,color=blue] (1.2,2.4)-- (1,2);
\begin{scriptsize}
\draw [fill=blue] (3,4) circle (1pt);
\draw [fill=blue] (3,3) circle (1pt);
\draw [fill=blue] (3,2) circle (1pt);
\draw [fill=blue] (3,1) circle (1pt);
\draw [fill=blue] (2,4) circle (1pt);
\draw [fill=blue] (2,3) circle (1pt);
\draw [fill=blue] (2,2) circle (1pt);
\draw [fill=blue] (2,0) circle (1pt);
\draw [fill=blue] (1,4) circle (1pt);
\draw [fill=blue] (1,3) circle (1pt);
\draw [fill=blue] (1,2) circle (1pt);
\draw [fill=blue] (0,4) circle (1pt);
\draw [fill=black]	 (1.2,2.4) circle (0.5pt);
\end{scriptsize}
\end{tikzpicture}
\caption{Special chambers}
\label{colors}
\end{figure}

We describe next the GIT quotient $\bbA^{12}//_{\chi}\bbG_m^3$ for characters $\chi$ associated to points $(b,c)$ lying either in the red or blue chambers.

\subsubsection{The red chamber}

For any point $(b,c)$ lying in the shaded red chamber in Figure \ref{colors}, we explicitly show how the corresponding character $\chi$ determines a GIT quotient which compactifies the  $\bbG_m \times \bbZ/2\bbZ$ quotient.

First, fix such point $(b,c)$ and choose integers $\alpha_1,\alpha_2$ and $\alpha_3$ with $\alpha_1>0$ such that $b=\alpha_2/\alpha_1, c=\alpha_3/\alpha_1$ and $\chi$ is the character $(r,s,t)\mapsto r^{\alpha_1}s^{\alpha_2}t^{\alpha_3}$. By Lemma \ref{support}, the following is a complete list of minimal support sets for some invariant monomial:

\[
I_{ij}=\{04,20,ij\} \quad I'_{ij}=\{04,12,ij\}
\]
where $ij \neq 04,12,20$. Thus, 
\[
(\bbA^{12})_{\chi}^{ss} = \bigcup_{I\in\text{Supp}(\chi)} D(I)
\]
where we define $\Supp(\chi)=\{I_{ij},I'_{ij}\}$. We denote the resulting toric variety by $X_{\text{red}}$.

Now, indexing the rows of $B$ by $\{12,14,22,23,24,31,32,33,34\}$
we obtain that for $ij\neq 13$,
the cones $\check{\sigma}_{I_{ij}}$ 
are determined by the inequalities (\ref{12vector}) and (\ref{13vector}) from Section \ref{z2}, 
plus the seven inequalities $p_{kl}\geq 0$ for $kl\neq 12, ij$.
This exactly corresponds to the cone denoted by $C_{ij}$ in that section.
For $I_{13}$, we have that  $\check{\sigma}_{I_{13}}$
is determined by (\ref{12vector})
and the eight inequalities $p_{k\ell} \geq 0$, for $k\ell \neq 12$;
this is the cone which we called $C_{13}$ in the previous section.
Therefore this GIT quotient is indeed a compactification of the quotient obtained in the previous section.

The additional cones that fill out the (complete) fan are the ones defined by the $I'_{ij}$ support sets.

Similarly, when $ij\neq 13$ the cones $\check{\sigma}_{I'_{ij}}$ 
are determined by (\ref{13vector}) and the following set of inequalities:

\begin{align}
p_{12}+p_{22}+p_{23}+p_{24}+p_{31}+p_{32}+p_{33}+p_{34}\leq 0 \label{20vector}\\
p_{kl}\geq 0 \quad kl\neq 12, ij
\end{align}
while $\check{\sigma}_{I'_{13}}$ is determined only by (\ref{20vector}) and
$p_{k\ell} \geq 0$ for $k\ell \neq 12$.

\subsubsection{The blue chamber}

Consider now any point $(b,c)$ lying in the shaded blue chamber indicated in Figure \ref{colors}, and consider the associated character. Again, the corresponding quotient $\bbA^{12}//_{\chi}\bbG_m^3$ compactifies the quotient from Section \ref{z2}. 

Lemma \ref{support} tells us that, for this chamber, the following is a complete list of minimal support sets for some invariant monomial:

\[
I_{ij}=\{04,20,ij\}, \quad I'_{kl}=\{04,12,kl\}, \quad I''_{mn}=\{12,13,mn\}, \quad I'''_{mn}=\{12,14,mn\}
\]
where $ij \neq 04,12,20, kl \neq 04,12,13,14,20$ and $mn \neq 04,12,13,14$.

Note that the support sets $I_{ij}$ are also support sets for the red chamber, hence the same description applies here. That is, the cones $\check{\sigma}_{I_{ij}}$ are precisely the cones $C_{ij}$ from Section \ref{z2}.

In addition, the cones that fill out the (complete) fan are the ones defined by the remaining support sets $I'_{kl},I''_{mn}$ and $I'''_{mn}$. We denote the resulting toric variety by $X_{\text{blue}}$.

Note that the support sets $I'_{kl}$ are also support sets for the red chamber and we see that crossing from one chamber to the other, the cones $\check{\sigma}_{I'_{13}}$ and $\check{\sigma}_{I'_{14}}$ are replaced by the cones $\check{\sigma}_{I''_{mn}}$ and $\check{\sigma}_{I'''_{mn}}$, and vice-versa. This completely determines how the corresponding quotients change. 

\subsubsection{The wall-crossing} 

If we choose a character $\chi$ which corresponds to a point $(b,c)$ lying over the line segment joining $(1,2)$ and $(1,3)$, then Lemma \ref{support} tells us the following is a complete list of minimal support sets for some invariant monomial:
\[
I=\{12,13\} \qquad J=\{12,14\} \qquad I_{ij}=\{04,20,ij\} \qquad  I'_{kl}=\{04,12,kl\}
\]
where $ij \neq 04,12,20$ and $kl \neq 04,12,13,14,20$.

The cone $\sigma_I$ is a common refinement (see e.g. \cite[Chapter 3]{cox}) of the cone $\sigma_{I'_{13}}$ and the cones 
$\sigma_{I''_{mn}}$; while the cone $\sigma_J$ is a refinement of $\sigma_{I'_{14}}$ and $\sigma_{I'''_{mn}}$.

Since the cones $\sigma_I$ and $\sigma_J$ are non-simplicial the fan $\Sigma_{\chi}$ is non-simplicial and a flip describes how the quotient $\bbA^{12}//_{\chi}\bbG_m^3$  changes as we cross this ``wall".

Adopting the notations borrowed from \cite[Section 15.3]{cox}, we first observe the vectors $B_{ij}$ with $ij\notin I$ satisfy the relation
\begin{equation}
\sum b_{ij}B_{ij}=B_{20}-B_{04}+B_{22}+B_{23}+B_{24}+2B_{31}+2B_{32}+2B_{33}+2B_{34}=0
\label{rel}
\end{equation}

Now, if we let $J_{-}=\{ij\,;\,b_{ij}<0\}=\{04\}$, $J_0=\{ij\,;\,b_{ij}=0\}=\{14\}$ and $J_{+}=\{ij\,;\,b_{ij}>0\}=Q \backslash \{04,12,13,14\}$, we can define the following fans
\[
\Sigma_{\pm}=\{\sigma \,;\, \sigma \preceq \text{Cone}(B_{ij}), ij\neq 12,13, ij \notin J_{\mp}\}
\]

The fan $\Sigma_{+}$ has maximal cone $\sigma_{I'_{13}}$ and the maximal cones of $\Sigma_{-}$ are precisely the cones 
$\sigma_{I''_{mn}}$. In particular, we obtain a commutative diagram of surjective toric morphisms:

\begin{equation}
\xymatrix{
X_{\text{red}} \supset X_{\Sigma_{+}} \supset U_{\sigma_{I'_{13}}} \ar@{.>}[dr]_{\varphi_{+}} & & U_{\sigma_{I''_{mn}}} \subset X_{\Sigma_{-}}\subset X_{\text{blue}} \ar@{.>}[dl]^{\varphi_{-}} \\
 &U_{\sigma_{I}}&
}
\label{diagram}
\end{equation}
where $U_{\sigma}$ denotes the affine toric variety of the cone $\sigma$.

Each morphism $\varphi_{\pm}: \Sigma_{\pm}\to U_{\sigma_I}$ is birational with exceptional locus $V(\sigma_{J_{\pm}})$, where $\sigma_{J_{\pm}}=\text{Cone}(B_{ij}\,;\,ij\in J_{\pm})$ and $V(\sigma_{J_{\pm}})$ is the toric variety of Star$(\sigma_{J_{\pm}})$ (as defined in  \cite[(3.2.8)]{cox}).

Similarly, the vectors $B_{ij}$ with $ij\notin J$ also satisfy the relation
(\ref{rel}) and we can apply the same kind of analysis as before. 

For these indexes, $J_{-}=\{04\},J_{0}=\{13\}, J_{+}=Q\backslash\{04,12,13,14\}$ and
\[
\Sigma_{\pm}=\{\sigma \,;\, \sigma \preceq \text{Cone}(B_{ij}), ij\neq 12,14, ij \notin J_{\mp}\},
\]
so that the maximal fan of $\Sigma_{+}$ is now the cone $\sigma_{I'_{14}}$ and the maximal cones of $\Sigma_{-}$ are the cones $\sigma_{I'''_{mn}}$. Again, we obtain a commutative diagram analogous to (\ref{diagram}).

The analysis gives us the following statement, using Proposition \ref{normalA12}.

\begin{thm} Fix $(b,c)$ in the red or blue chambers.
Then the corresponding quotient $\bbA^{12}/\bbG_m^3$ contains
the quotient $\Proj(k[N])$ constructed in the previous section,
and compactifies that quotient.
In particular the subset of these quotients corresponding to orbits of allowable branch curves is a moduli space for marked RESs of index two.
\end{thm}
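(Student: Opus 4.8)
The plan is to assemble the explicit toric dictionary built in the two preceding sections. Section \ref{z2} realizes $\Proj(k[N])$ as the toric variety of the fan whose maximal cones are the nine cones $C_{13}$ and $C_{ij}$ (for $ij\neq 12,13$), and the red/blue subsections above (Figure \ref{colors}) show that, for $(b,c)$ in the red or blue chamber, these same cones reappear as the subcollection $\{\check{\sigma}_{I_{ij}}\}$ of the fan $\Sigma_{\chi}$ defining $\bbA^{12}//_{\chi}\bbG_m^3$, where $I_{ij}=\{04,20,ij\}$. Since a subfan corresponds to a toric-open subvariety, this already yields an open immersion $\Proj(k[N])\hookrightarrow \bbA^{12}//_{\chi}\bbG_m^3$; concretely it is the good quotient by $\bbG_m^3$ of the invariant open set $\Omega:=\bigcup_{ij\neq 04,12,20}D(I_{ij})$, which lies in $(\bbA^{12})^{ss}_{\chi}$ because each $I_{ij}$ is a support set for an invariant monomial. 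To conclude the first two assertions I would then observe that $\bbA^{12}//_{\chi}\bbG_m^3$ is complete: the first row of the matrix $A$ in (\ref{system}) is $(1,\dots,1)$, so the only $\bbG_m^3$-invariant regular functions on $\bbA^{12}$ are constants and $\bbA^{12}//_{\chi}\bbG_m^3=\Proj(k[S])$ is projective over $\Spec k$. (Equivalently, one checks that the cones $\check{\sigma}_{I'_{ij}}$---and in the blue case also $\check{\sigma}_{I''_{mn}}$, $\check{\sigma}_{I'''_{mn}}$---together with the $\check{\sigma}_{I_{ij}}$ tile $\bbR^9$, which is what those subsections record.) Hence $\Proj(k[N])$ sits inside $\bbA^{12}//_{\chi}\bbG_m^3$ as a (non-complete) open subvariety of a complete one, i.e. is compactified by it.

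For the final ``in particular'' I would first make the open immersion compatible with the interpretation of points as orbits. Since $\Omega\subset\{a_{04}a_{20}\neq 0\}$ and the subgroup of $\bbG_m^3$ preserving the slice $\{a_{04}=a_{20}=1\}$ is the group $(t^{-4},\pm t^2,t)\cong\bbG_m\times\bbZ/2\bbZ$ of Section \ref{z2}, every $\bbG_m^3$-orbit in $\Omega$ meets this slice in a single $\bbG_m\times\bbZ/2\bbZ$-orbit, so the quotient $\Omega/\bbG_m^3$ is identified with an open subset of $\bbA^{10}/(\bbG_m\times\bbZ/2\bbZ)=\Proj(k[N])$ in a way compatible with Proposition \ref{normalA12} and Proposition \ref{normalA10}. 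It then remains to see that every allowable $\bbG_m^3$-orbit actually lands in $\Omega$---hence is semistable and lies in this copy of $\Proj(k[N])$---which is the one genuinely geometric point: if $a_{04},a_{20}\neq 0$ but $a_{ij}=0$ for all $ij\in\{13,14,22,23,24,31,32,33,34\}$, then $f_C$ is divisible by $v$ and, carrying out the discriminant computation behind $\eta^2+w_{12}\eta+1$, the residual curve is either two $(2,1)$-curves, which together with the fiber $\{v=0\}$ produce a point with three mutually tangent smooth branches (not an ade singularity), or it is non-reduced; in either case $f_C$ is not allowable. Granting this, the allowable locus of $\bbA^{10}$ maps onto the allowable locus of $\Proj(k[N])$, which by the last theorem of Section \ref{z2} is a moduli space for marked RES's of index two, and this is precisely the subset of $\bbA^{12}//_{\chi}\bbG_m^3$ corresponding to allowable orbits.

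The main obstacle I anticipate is not a single hard estimate but the bookkeeping that makes the open immersion $\Proj(k[N])\hookrightarrow\bbA^{12}//_{\chi}\bbG_m^3$ genuinely identify the two parametrizations of branch curves rather than merely exhibit an abstract inclusion of toric varieties: this forces one to line up the normalization $a_{04}=a_{20}=1$, the residual stabilizer $\bbG_m\times\bbZ/2\bbZ$, and the cone identities $\check{\sigma}_{I_{ij}}=C_{ij}$ simultaneously, and to rule out allowable orbits hiding in the unstable locus. By contrast, completeness of $\bbA^{12}//_{\chi}\bbG_m^3$ (from $k[\bbA^{12}]^{\bbG_m^3}=k$) and the fan-tiling statement are comparatively routine once the coordinates are fixed.
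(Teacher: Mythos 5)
Your proposal is correct and follows the paper's approach: the paper relies on the cone identifications $\check{\sigma}_{I_{ij}}=C_{ij}$ established in the red/blue chamber analyses together with Proposition \ref{normalA12}. You merely make explicit two details the paper leaves implicit or defers to Section \ref{unstable}---that the fan (equivalently, the quotient) is complete, and that allowable points avoid the unstable stratum $\{a_{ij}=0 \text{ for all } ij\neq 04,12,20\}$.
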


We remark that for $(b,c)$ in any of the shaded regions in Figure 2 this is still true.
We presented the analysis explicitly for the red and blue regions
(and the line segment separating them)
to illustrate the wall-crossing phenomenon in the toric GIT quotients for this situation.

A natural question then is: Given $(b,c)$ in the red or blue chambers, what are the curves being parametrized by the $\bbG_m^3$ quotient, but not by $\Proj(k[N])$? To answer this question we need to determine what are the points lying in $(\bbA^{12})^{ss}\backslash \bigcup D(I_{ij})$; or, equivalently,  we need to determine when is a point $(a_{ij})_{ij\in Q}$  satisfying one of the conditions below semi-stable.
\begin{enumerate}[(i)]
\item $a_{04}=0$
\item $a_{20}=0$
\item $a_{ij}=0\,,\,\forall\, ij \neq 04,12,20$
\end{enumerate} 

Ou analysis proves the following:

\begin{prop}
Fix $(b,c)$ in the red chamber. Then 
\[
(\bbA^{12})^{ss}\backslash \bigcup D(I_{ij})=\{(a_{ij})_{ij \in Q}
\,;
\, a_{20}=0\,\text{and}\,a_{12}\neq 0\}
\]
\end{prop}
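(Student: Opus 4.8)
The plan is to read the statement straight off the description of the semi\-stable locus already obtained for the red chamber. Recall that, by Lemma \ref{support}, for a character $\chi$ with $(b,c)$ in the (interior of the) red chamber the minimal support sets of invariant monomials are exactly
\[
I_{ij}=\{04,20,ij\}\qquad\text{and}\qquad I'_{ij}=\{04,12,ij\},\qquad ij\in Q\setminus\{04,12,20\},
\]
so that $(\bbA^{12})^{ss}=\bigcup_{ij}D(I_{ij})\cup\bigcup_{ij}D(I'_{ij})$, where $D(I_{ij})$ (resp.\ $D(I'_{ij})$) is the invariant open set on which $a_{04},a_{20},a_{ij}$ (resp.\ $a_{04},a_{12},a_{ij}$) are all nonzero. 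Everything will follow by unwinding these two facts, keeping in mind that the left-hand side of the asserted equality automatically lies inside $(\bbA^{12})^{ss}$, so semistability is present as a standing hypothesis on both sides.

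First I would prove the inclusion ``$\subseteq$''. Let $p$ be a semistable point lying in none of the $D(I_{ij})$. By the decomposition above it must lie in $D(I'_{mn})$ for some $mn$, so $a_{04}(p)\neq 0$, $a_{12}(p)\neq 0$ and $a_{mn}(p)\neq 0$. If $a_{20}(p)$ were also nonzero, then $a_{04},a_{20},a_{mn}$ would all be nonzero at $p$, i.e.\ $p\in D(I_{mn})$, contradicting the choice of $p$; hence $a_{20}(p)=0$, and together with $a_{12}(p)\neq 0$ this puts $p$ in the claimed set. For the reverse inclusion, let $p$ be a semistable point with $a_{20}(p)=0$ and $a_{12}(p)\neq 0$. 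Since every $I_{ij}$ contains the index $20$, we have $D(I_{ij})\subseteq\{a_{20}\neq 0\}$, so $p\notin D(I_{ij})$ for every $ij$, and therefore $p\in(\bbA^{12})^{ss}\setminus\bigcup_{ij}D(I_{ij})$. I would also record here that the hypothesis $a_{12}\neq 0$ is in fact automatic on the relevant locus: a semistable point with $a_{20}=0$ cannot lie in any $D(I_{ij})$, so by the decomposition it lies in some $D(I'_{mn})$, forcing $a_{04}\neq 0$ and $a_{12}\neq 0$.

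The argument is genuinely short, so there is no deep obstacle; the one input that does the work is the \emph{completeness} of the list $\{I_{ij},I'_{ij}\}$ of minimal support sets for the red chamber, which was established via Lemma \ref{support} (and Lemma \ref{size}). Without knowing that this list is exhaustive one could not conclude that a semistable point avoiding every $D(I_{ij})$ must fall into some $D(I'_{mn})$, and that is precisely the step pinning down $a_{04}\neq 0$ and $a_{12}\neq 0$ on the complement. The remaining care is pure bookkeeping: reading off $D(I)$ from its index set, and correctly handling the universal quantifier ``for every $ij$'' when asserting that $p$ lies outside $\bigcup_{ij}D(I_{ij})$.
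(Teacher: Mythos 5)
Your proof is correct and carries out precisely the unwinding the paper leaves implicit when it says ``our analysis proves the following.'' You correctly identify that the whole argument rests on the completeness of the list $\{I_{ij}, I'_{ij}\}$ of minimal support sets for the red chamber (Lemmas \ref{support} and \ref{size}), which gives the decomposition $(\bbA^{12})^{ss}=\bigcup D(I_{ij})\cup\bigcup D(I'_{ij})$, and you then extract both inclusions by elementary bookkeeping. One point you handle correctly but that is worth flagging explicitly: the displayed right-hand side of the proposition, read literally, is strictly larger than the left-hand side --- for instance a point with $a_{20}=0$, $a_{12}\neq 0$, and $a_{04}=0$ satisfies the stated conditions yet lies in the unstable locus $\{a_{04}=0\}$. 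Your reading of the equality as taking place inside $(\bbA^{12})^{ss}$ (since the LHS lives there by construction) is clearly the intended one, and with that convention your proof is complete; your observation that $a_{12}\neq 0$ is automatic for a semistable point with $a_{20}=0$ is also accurate and a nice addendum.
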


And, similarly,

\begin{prop}
Fix $(b,c)$ in the blue chamber. Then $(\bbA^{12})^{ss}\backslash \bigcup D(I_{ij})$ consists of the set of points $(a_{ij})_{ij \in Q}$ satisfying one of the conditions below
\begin{enumerate}[(i)]
\item $a_{04}=a_{14}=0$ and $a_{12}\cdot a_{13} \cdot a_{mn}\neq 0$ for some $mn \neq 04,12,13,14$
\item $a_{04}=a_{13}=0$ and $a_{12}\cdot a_{14} \cdot a_{mn}\neq 0$ for some $mn \neq 04,12,13,14$
\item $a_{20}=a_{13}=a_{14}=0$ and $a_{04}\cdot a_{12} \cdot a_{kl}\neq 0$ for some $kl \neq 04,12,13,14,20$
\end{enumerate} 
\end{prop}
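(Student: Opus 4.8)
The plan is to read the answer directly off the list of minimal support sets for the blue chamber established above, exactly as was done for the red chamber. Recall that $(\bbA^{12})^{ss}=\bigcup_I D(I)$, the union running over the minimal support sets $I\in\{I_{ij},I'_{kl},I''_{mn},I'''_{mn}\}$, and that $D(I)$ is the invariant open locus on which every coordinate indexed by $I$ is nonzero. Since $I_{ij}=\{04,20,ij\}$ with $ij\neq 04,12,20$, a point lies in $\bigcup_{ij}D(I_{ij})$ if and only if $a_{04}\neq 0$, $a_{20}\neq 0$, and $a_{ij}\neq 0$ for at least one $ij\notin\{04,12,20\}$. Hence a semistable point fails to lie in $\bigcup D(I_{ij})$ precisely when $a_{04}=0$, or $a_{20}=0$, or else $a_{04},a_{20}\neq 0$ while all the remaining coordinates $a_{ij}$ ($ij\neq 04,12,20$) vanish. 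The last alternative is vacuous: by Lemma \ref{support} a subset $I$ can be a support set only if $(b,c)$ lies in the strict convex hull of $\{(i,j)\,;\,ij\in I\}$, and the three lattice points $(0,4),(1,2),(2,0)$ indexed by $\{04,12,20\}$ are collinear, so $(b,c)$ would have to lie on the line through them, which never happens in the blue chamber; consequently no semistable point has support contained in $\{04,12,20\}$. It therefore remains to describe the semistable points with $a_{04}=0$ and those with $a_{20}=0$.

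For the first, a semistable point with $a_{04}=0$ cannot lie in any $D(I_{ij})$ or $D(I'_{kl})$, since these require $a_{04}\neq 0$; so it must lie in some $D(I''_{mn})$ or $D(I'''_{mn})$, with $I''_{mn}=\{12,13,mn\}$, $I'''_{mn}=\{12,14,mn\}$, $mn\neq 04,12,13,14$. Tracking which coordinates must then be nonzero forces the patterns recorded in (i) and (ii); conversely a point satisfying (i) (resp. (ii)) lies in the corresponding $D(I''_{mn})$ (resp. $D(I'''_{mn})$), hence is semistable, and has $a_{04}=0$, hence is not in $\bigcup D(I_{ij})$. For the second, a semistable point with $a_{20}=0$ and $a_{04}\neq 0$ --- the subcase $a_{04}=0$ having just been treated --- again cannot lie in any $D(I_{ij})$, and since it lies in some $D(I'_{kl})$, $D(I''_{mn})$ or $D(I'''_{mn})$, reconciling these with the condition $a_{04}\neq 0$ leaves precisely the pattern in (iii); conversely a point satisfying (iii) lies in the relevant $D(I'_{kl})=D(\{04,12,kl\})$, so is semistable, while $a_{20}=0$ keeps it out of $\bigcup D(I_{ij})$. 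Assembling the three cases gives the asserted description.

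The routine but delicate part is the bookkeeping inside these two cases: for each prescribed vanishing pattern one must pin down exactly which of the four families $I_{ij},I'_{kl},I''_{mn},I'''_{mn}$ can furnish a $D(I)$ containing the point. The genuine subtlety --- and the reason the blue answer differs from the red one --- is that, across the wall, the sets $I'_{13}=\{04,12,13\}$ and $I'_{14}=\{04,12,14\}$ cease to be support sets and are replaced by the families $I''_{mn}$ and $I'''_{mn}$; this is what produces the extra vanishing conditions $a_{14}=0$ in (i), $a_{13}=0$ in (ii), and $a_{13}=a_{14}=0$ in (iii). One justifies this once and for all by invoking Lemma \ref{support} again, checking that a point $(b,c)$ in the blue chamber lies outside the triangles spanned by $\{(0,4),(1,2),(1,3)\}$ and $\{(0,4),(1,2),(1,4)\}$ (both of which are contained in the half-plane $i\le 1$, while the blue chamber lies in $i\ge 1$).
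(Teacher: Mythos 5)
Your overall structure is correct: split into the three cases $a_{04}=0$, $a_{20}=0$, and $a_{ij}=0$ for all $ij\neq 04,12,20$, and dispose of the third case via collinearity of $(0,4),(1,2),(2,0)$. But the decisive step --- ``tracking which coordinates must then be nonzero forces the patterns recorded in (i) and (ii)'' --- is not justified, and in fact it does not hold. In the case $a_{04}=0$, semistability only tells you the point lies in some $D(I''_{mn})$ or $D(I'''_{mn})$, i.e.\ $a_{12}a_{13}a_{mn}\neq 0$ or $a_{12}a_{14}a_{mn}\neq 0$ for some $mn\neq 04,12,13,14$; this does \emph{not} force $a_{14}=0$ (nor $a_{13}=0$). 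Concretely, the point with $a_{04}=0$ and $a_{20}=a_{12}=a_{13}=a_{14}=a_{22}=1$ (all other coordinates zero) lies in $D(I''_{22})$, hence is semistable; it is outside $\bigcup D(I_{ij})$ since $a_{04}=0$; yet it satisfies none of (i) (because $a_{14}\neq 0$), (ii) (because $a_{13}\neq 0$), or (iii) (because $a_{20}\neq 0$). Likewise for your case $a_{20}=0$: the point with $a_{20}=0$ and $a_{04}=a_{12}=a_{13}=a_{22}=1$ lies in $D(I'_{22})$, so is semistable and outside $\bigcup D(I_{ij})$, but fails (iii) because $a_{13}\neq 0$.

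The heuristic in your final paragraph is exactly where the slip occurs: the fact that $I'_{13}$ and $I'_{14}$ cease to be support sets in the blue chamber enlarges the \emph{unstable} locus (it makes $\{a_{04}=a_{13}=a_{14}=0\}$ unstable, consistent with the list in Section \ref{unstable}); it does not impose the vanishing of $a_{13}$ or $a_{14}$ on semistable points. If you carry out the bookkeeping honestly --- for each vanishing pattern, determine exactly which $D(I)$ can contain the point --- you find that the correct description of $(\bbA^{12})^{ss}\backslash\bigcup D(I_{ij})$ is obtained by \emph{dropping} the conditions $a_{14}=0$ in (i), $a_{13}=0$ in (ii), and $a_{13}=a_{14}=0$ in (iii). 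With those removed your case analysis does deliver the result; as written, the assertion that these vanishings are ``forced'' is the gap, and the statement of the Proposition as printed would need the same correction (or else the support-set list for the blue chamber would have to be revisited).
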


\section{Unstable loci}
\label{unstable}

By definition, a point $(a_{ij})_{ij \in Q}\in \bbA^{12}$ is unstable for the $\bbG_m^3$ action if and only if it is not semistable, if and only if every invariant monomial vanishes at it. In particular,
\[
(\mathbb{A}^{12})_{\chi}^{u} = \bigcap_{\text{minimal}\,I\in\Supp(S)} V(I)
\]
where $V(I)=\bbA^{12}\backslash D(I)$ consists of the set of common zeroes of the invariant monomials with support set $I$. That is, $V(I)=\left\{(a_{ij})_{ij \in Q}\,;\,\prod_{ij\in I} a_{ij}=0\right\}$.

This allows us to explicitly describe the unstable locus for any character $\chi$ associated to a point $(b,c)$ lying in the quadrilateral in Figure \ref{chambers}.   

For characters $\chi$ associated to points $(b,c)$ lying in the red chamber, the unstable locus is 
\[
\{ a_{04}=0\}\cup \{a_{12}=a_{20}=0\} \cup \{a_{ij}=0,\, \forall\, ij \neq 04,12,20\}
\]

Similarly, for characters associated to points lying in the blue chamber, one shows that the unstable locus is 
\begin{align*}
\{a_{04}=a_{12}=0\}\cup\{a_{12}=a_{20}=0\} \cup \{a_{04}=a_{13}=a_{14}=0\}\cup\\
  \{a_{04}=a_{mn}=0,\, \forall\, mn \neq 04,12,13,14\}\cup  
  \{a_{20}=a_{kl}=0,\, \forall\, kl \neq 04,12,13,14,20\}\cup \\
 \{a_{13}=a_{14}=a_{kl}=0,\, \forall\, kl \neq 04,12,13,14,20\}
\end{align*}

Note that, by Lemma \ref{normal}, when a point $(a_{ij})_{ij \in Q}\in \bbA^{12}$ describes a RES of index two, then it must be the case that $a_{04} \neq 0$ and $a_{20}\neq 0$. Hence for both chambers the only RES of index two that could a priori correspond to unstable points are those with branch curve $C$ given by 
\[
a_{04}v^3x^4+a_{12}uv^2x^2y^2+a_{20}u^2vy^4=0
\]

Now, such curve contains the curve $v=0$ as a component and the residual $(4,2)$ curve  has an additional tacnode at the point $p'$ given by $y=v=0$ (with a vertical tangent $v=0$). In particular, this $(4,2)$ curve cannot be irreducible by the genus formula. In fact it consists of two curves of bidegree $(2,1)$ which are tangent at the tacnode. After resolving the singularity at $p'$, the construction from Section \ref{double} yields a surface which is birational to a product of an elliptic curve and $\bbP^1$ and not a RES of index two.

\bibliography{ref}
\bibliographystyle{alpha}

\end{document}